\definecolor{cof}{RGB}{219,144,71}
\definecolor{pur}{RGB}{186,146,162}
\definecolor{greeo}{RGB}{91,173,69}
\definecolor{greet}{RGB}{52,111,72}
\newcommand{\bal}[1] {\ensuremath{\left(\begin{array}{#1}}}
\newcommand{\ear} {\ensuremath{\end{array}\right)}}
\newcommand{\bals}[1] {\ensuremath{\left[\begin{array}{#1}}} 
\newcommand{\ears} {\ensuremath{\end{array} \right] }} 
\newcommand{\one} {\ensuremath{\mathds{1} }} 
\DeclareMathOperator{\spa}{span}
\DeclareMathOperator{\rank}{rank}
\DeclareMathOperator{\sign}{sign}
\DeclareMathOperator{\diag}{diag}
\newcommand{\funcRdR}{\ensuremath{{V}}}
\newcommand{\funcRdRd}{\ensuremath{{f}}}
\DeclareMathOperator*{\bigtimes}{\raisebox{-0.3ex}{\text{\Large$\times$}}}
\let\leq\leqslant
\let\geq\geqslant
\let\emptyset\varnothing
\newcommand{\calC}{\ensuremath{\mathcal{C}}}
\newcommand{\calE}{\ensuremath{\mathcal{E}}}
\newcommand{\calF}{\ensuremath{\mathcal{F}}}
\newcommand{\calG}{\ensuremath{\mathcal{G}}}
\newcommand{\calI}{\ensuremath{\mathcal{I}}}
\newcommand{\calL}{\ensuremath{\mathcal{L}}}
\newcommand{\calR}{\ensuremath{\mathcal{R}}}
\newcommand{\calS}{\ensuremath{\mathcal{S}}}
\newcommand{\calT}{\ensuremath{\mathcal{T}}}
\newcommand{\calX}{\ensuremath{\mathcal{X}}}
\newcommand{\bmat}{\begin{matrix}}
\newcommand{\emat}{\end{matrix}}
\newcommand{\bbm}{\begin{bmatrix}}
\newcommand{\ebm}{\end{bmatrix}}
\newcommand{\bpm}{\begin{pmatrix}}
\newcommand{\epm}{\end{pmatrix}}
\newcommand{\bse}{\begin{subequations}}
\newcommand{\ese}{\end{subequations}}
\newcommand{\beq}{\begin{equation}}
\newcommand{\eeq}{\end{equation}}
\newcommand{\ben}{\begin{enumerate}}
\newcommand{\een}{\end{enumerate}}
\newcommand{\beni}{\renewcommand{\labelenumi}{\roman{enumi}.}
\renewcommand{\theenumi}{\roman{enumi}}\begin{enumerate}}
\newcommand{\eeni}{\end{enumerate}\renewcommand{\labelenumi}{\arabic{enumi}.}
\renewcommand{\theenumi}{\arabic{enumi}}}
\newcommand{\bena}{\renewcommand{\labelenumi}{\alpha{enumi}.}
\renewcommand{\theenumi}{\alpha{enumi}}\begin{enumerate}}
\newcommand{\eena}{\end{enumerate}\renewcommand{\labelenumi}{\arabic{enumi}.}
\renewcommand{\theenumi}{\arabic{enumi}}}
\newcommand{\bit}{\begin{itemize}}
\newcommand{\eit}{\end{itemize}}
\newcommand{\R}{\ensuremath{\mathbb R}}
\newcommand{\T}{\ensuremath{\top}}
\begin{document}

\begin{frontmatter}

\title{Finite-time Consensus Protocols for Multi-dimensional Multi-agent Systems}

\thanks[footnoteinfo]{This paper was not presented at any IFAC 
meeting. Corresponding author Jieqiang Wei.}

\author[a1]{J. Wei}\ead{jieqiang@kth.se},
\author[a2]{B. Besselink}\ead{b.besselink@rug.nl},
\author[a1]{J. Wu}\ead{junfengw@kth.se},
\author[a1]{H. Sandberg}\ead{hsan@kth.se},
\author[a1]{K. H. Johansson}\ead{kallej@kth.se}

\address[a1]{ACCESS Linnaeus Centre, School of Electrical Engineering, KTH Royal Institute of Technology, 10044, Stockholm, Sweden}  
\address[a2]{Johann Bernoulli Institute for Mathematics and Comp. Science, Univ. of Groningen, P.O. Box 407, 9700 AK, the Netherlands}             

\begin{keyword}                           
Multi-agent systems; Consensus; Nonsmooth analysis; Finite-time convergence.               
\end{keyword}  

\begin{abstract}\label{s:Abstract}
Two finite-time consensus protocols are proposed for multi-dimensional multi-agent systems, using direction-preserving and component-wise signum controls respectively. Filippov solutions and non-smooth analysis techniques are adopted to handle discontinuities. Sufficient and necessary conditions are provided to guarantee finite-time convergence and boundedness of the solutions. It turns out that the number of agents which have continuous control law plays an essential role for finite-time convergence. In addition it is shown that the unit balls introduced by $\ell_p$ and $\ell_\infty$ norms are invariant for these two protocols respectively. 
\end{abstract}

\end{frontmatter}


\section{Introduction}\label{s:Introduction}
Multi-agent systems have a broad spectrum of applications both in military and civilian environments, and have been a focus area of research for decades. The essential goal of the control of multi-agent systems is to let the agents achieve some state cooperatively with only local information exchange, e.g., \cite{Jadbabaie2003,Olfati-Saber2007}.
Whereas most results on control of multi-agent systems focus on asymptotic convergence properties, we study a \emph{finite-time} convergence problem in this paper.

Existing results on finite-time convergence of multi-agent systems can roughly be divided into two groups: namely those exploiting continuous or discontinuous control protocols. These groups have in common that the nonlinear control laws are not Lipschitz continuous at the desired consensus space. For example, continuous strategies are usually based on applying a nonlinear state feedback strategy that includes fractional powers, e.g., \cite{Hui2008,wangxiao2010,Xiao2009}, or on a high gain converging to infinity as time approaches the converging time, e.g., \cite{Harl2012}.
The discontinuous strategies utilize non-smooth control tools, e.g., \cite{ChenCaoRen2012,Chen2011,Cortes2006,Hui2010,Li2014,LiuLam2016}, typically exploiting signum functions.
%
%
In \cite{Chen2011}, the authors construct a finite-time consensus law using binary information, namely, the sign of state differences of each pair of agents. A different approach towards finite-time consensus is taken in \cite{Cortes2006}, where controllers are studied that contain the sign of the sum of the state differences. However, in this work, the boundedness of (Filippov) solutions can not be guaranteed.
Sufficient conditions for boundedness of Filippov solutions are given in \cite{Wei2015} for a general class of nonlinear multi-agent systems that includes the results in \cite{Cortes2006} as a special case. However, in \cite{Wei2015}, only asymptotic convergence properties are considered and no results on finite-time consensus are available. In the current paper, such finite-time convergence properties are studied.


Next, we note that the existing results on finite-time consensus mentioned above typically hold for multi-agent systems in which the agent dynamics is scalar. Nonetheless, there are many applications in which multi-dimensional agent dynamics are of interest, see, e.g., the problems of attitude control~\cite{Thunberg2014} and circumnavigation~\cite{Shames2012}. The current paper therefore focuses on multi-dimensional agents.

Specifically, the contributions of this paper are as follows. First, a control strategy for multi-dimensional multi-agent systems is presented that relies on the definition of a \emph{direction-preserving} signum function. For any $p\in[0,\infty]$, the corresponding direction-preserving signum function maps nonzero points to the boundary of the unit ball in $\ell_p$, but such that the direction is preserved. As a result, the analysis of multi-agent systems subject to this control strategy differs significantly from the scalar case. Second, for these systems, it is shown that the unit ball in the $\ell_p$ norm is invariant under the dynamics of the multi-agent systems. Third, necessary and sufficient conditions for the finite-time consensus of such multi-agent systems are given. These conditions indicate that, in order to achieve finite-time \emph{static} consensus, one of the agents should employ a Lipschitz continuous control strategy. Finally, it is shown how the results of this paper can be used to study multidimensional multi-agent systems with \emph{component-wise} signum functions.


The structure of the paper is as follows. In Section~\ref{s:Preliminaries}, we introduce terminology and notation on graph theory and stability analysis of discontinuous dynamical systems. Section~\ref{s:problem formulation} presents the problem formulation of finite-time consensus. The main result is presented in Section~\ref{s:vector}, which includes illustrative examples. Then the conclusions follow in Section~\ref{s:conclusions}.

\textbf{Notation}. With $\R_-,\R_+, \R_{\geq 0}$ and $\R_{\leq 0}$ we denote the sets of negative, positive, non-negative, non-positive  real numbers, respectively. A positive semidefinite (symmetric) matrix $M$ is denoted as $M\succcurlyeq 0$. The $i$-th row of a matrix $M$ is given by $M_{i}$. The vectors $e_1,e_2,\ldots,e_n$ denote the canonical basis of $\R^n$, whereas the vectors $\one_n$ and $\mathbf{0}_n$ represent a $n$-dimensional column vector with each entry being $1$ and $0$, respectively. We will omit the subscript $n$ when no confusion arises. The $\ell_p$ norm with $p\in[1,\infty]$ is denoted as $\|\cdot\|_p$. Specifically, for a vector $x\in\R^n$, $\|x\|_p=(|x_1|^p+\cdot+|x_n|^p)^{\frac{1}{p}}$ for $p\in[1,\infty)$, and $\|x\|_\infty= \max\{|x_1|,\ldots,|x_n|\}$. The notation $B(x,\delta)$ represents the open ball centered at $x$ with radius $\delta>0$ with $\ell_2$ norm.


\section{Preliminaries}\label{s:Preliminaries}
In this section we briefly review some essentials from graph theory \cite{biggs1993algebraic,Bollobas98}, and give some results on Filippov solutions \cite{filippov1988} of differential equations with discontinuous vector fields.

An undirected \emph{graph} $\calG=(\calI,\calE)$ consists of a finite set of nodes $\calI = \{1,2,\ldots,n\}$ and a set of edges $\calE\in\calI\times\calI$ of unordered pairs of elements of $\calI$. To any edge $(i,j)\in\calE$, we associate a weight $w_{ij}>0$. The weighted adjacency matrix $A = [a_{ij}]\in\R^{n\times n}$ is defined by $a_{ji} = w_{ij}$ if $(i,j)\in\calE$ and $a_{ji} = 0$ otherwise. Note that $A = A^{\T}$ and that $a_{ii}=0$ as no self-loops are allowed. For each node $i\in\calI$, its degree $d_i$ is defined as $d_i = \sum_{j=1}^n a_{ij}$. The graph Laplacian $L$ is defined as $L = \Delta - A$ with $\Delta$ a diagonal matrix such that $\Delta_{ii}=d_i$. As a result, $L\one = \mathbf{0}$. Finally, we say that a graph $\calG$ is connected if, for any two nodes $i$ and $j$, there exists a sequence of edges that connects them. In order to simplify the notation in the proofs, we set the weight $w_{ij}$ to be one. All the results in this paper hold for general positive nonzero $w_{ij}$.

The following result essentially states that the Schur complement of a graph Laplacian is itself a graph Laplacian.
\begin{lem}[\cite{vanderSchaft2010}]\label{lem_schur complement}
Consider a connected undirected graph $\calG$ with Laplacian matrix $L$, then all Schur complements of $L$ are well-defined, symmetric, positive semi-definite, with diagonal elements $>0$, off-diagonal elements $\leq 0$, and with zero row and column sums.
\end{lem}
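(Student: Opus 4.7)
The plan is to partition $\{1,\dots,n\}$ as $\alpha\cup\beta$ with both subsets non-empty, write
\begin{equation*}
L=\begin{bmatrix} L_{\alpha\alpha} & L_{\alpha\beta} \\ L_{\beta\alpha} & L_{\beta\beta}\end{bmatrix},
\end{equation*}
and verify the claims in turn: invertibility of $L_{\alpha\alpha}$ (so the Schur complement is well-defined), symmetry and positive semi-definiteness of $L/L_{\alpha\alpha}$, zero row/column sums, and the required sign pattern.

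First I would argue $L_{\alpha\alpha}\succ 0$. If $L_{\alpha\alpha}x=0$, then $x^{\T}L_{\alpha\alpha}x=0$; extending $x$ by zeros on $\beta$ to $\tilde x\in\R^{n}$ gives $\tilde x^{\T}L\tilde x=0$, which since $L\succeq 0$ forces $L\tilde x=0$. Connectedness of $\calG$ yields $\ker L=\spa\{\one\}$, and since $\tilde x$ vanishes on the non-empty set $\beta$, necessarily $\tilde x=0$. Symmetry of $L/L_{\alpha\alpha}:=L_{\beta\beta}-L_{\beta\alpha}L_{\alpha\alpha}^{-1}L_{\alpha\beta}$ is immediate from $L_{\alpha\beta}^{\T}=L_{\beta\alpha}$, and positive semi-definiteness follows from the standard block factorization
\begin{equation*}
L=T^{\T}\begin{bmatrix} L_{\alpha\alpha} & 0 \\ 0 & L/L_{\alpha\alpha}\end{bmatrix}T
\end{equation*}
with $T$ unit upper-triangular, combined with $L\succeq 0$ and $L_{\alpha\alpha}\succ 0$. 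The zero row-sum property comes from partitioning $L\one=0$ blockwise: $L_{\alpha\alpha}\one_{\alpha}+L_{\alpha\beta}\one_{\beta}=0$ gives $\one_{\alpha}=-L_{\alpha\alpha}^{-1}L_{\alpha\beta}\one_{\beta}$, and substitution into the $\beta$-block yields $(L/L_{\alpha\alpha})\one_{\beta}=L_{\beta\beta}\one_{\beta}+L_{\beta\alpha}\one_{\alpha}=0$; zero column sums follow by symmetry.

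For the sign pattern, the key is that $L_{\alpha\alpha}$ is a symmetric M-matrix (positive definite with non-positive off-diagonals inherited from $L$), so by the classical theory its inverse is entrywise non-negative. Since $L_{\alpha\beta},L_{\beta\alpha}\leq 0$ entrywise, the product $L_{\beta\alpha}L_{\alpha\alpha}^{-1}L_{\alpha\beta}$ is entrywise $\geq 0$, and subtracting it from $L_{\beta\beta}$ — whose off-diagonals are $\leq 0$ — gives the desired non-positive sign on the off-diagonals of $L/L_{\alpha\alpha}$. Strict positivity of each diagonal entry then follows from the zero row-sum identity combined with connectedness: for any $i\in\beta$, a path in $\calG$ from $i$ to some other node of $\beta$ (possibly routed through $\alpha$) produces a strictly negative entry in the $i$-th row of $L/L_{\alpha\alpha}$, forcing the diagonal entry to be strictly positive.

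The main obstacle is the sign analysis: the entrywise non-negativity of $L_{\alpha\alpha}^{-1}$ requires the classical fact that a symmetric positive definite Z-matrix has a non-negative inverse, and tracking signs through the triple product is where the special Laplacian structure (beyond mere PSD-ness) is really used. Everything else — well-definedness, symmetry, PSD-ness, and the row/column sum identities — is routine block algebra once $L_{\alpha\alpha}\succ 0$ has been established.
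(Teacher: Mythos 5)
Your argument is essentially the standard proof of this fact; note that the paper itself does not prove the lemma at all but imports it from van der Schaft (2010), so the comparison is against that classical route, which you follow: invertibility of $L_{\alpha\alpha}$ from $\ker L=\spa\{\one\}$ and positive semi-definiteness, symmetry and PSD-ness of the Schur complement from the block (congruence) factorization, zero row/column sums from $L\one=\mathbf{0}$, and the sign pattern from the Stieltjes/M-matrix property of $L_{\alpha\alpha}$. All of those steps are correct as written.

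Two points deserve tightening. First, in the diagonal-positivity step, entrywise nonnegativity of $L_{\alpha\alpha}^{-1}$ is not enough: to conclude that a path from $i\in\beta$ to another node of $\beta$ routed through $\alpha$ yields a \emph{strictly} negative off-diagonal entry of the Schur complement, you need $(L_{\alpha\alpha}^{-1})_{kl}>0$ whenever $k,l\in\alpha$ are joined by a path lying inside $\alpha$. This refinement is standard but should be stated; it follows, e.g., from the Neumann expansion $L_{\alpha\alpha}^{-1}=\sum_{m\geq 0}s^{-(m+1)}B^m$ with $B=sI-L_{\alpha\alpha}\geq 0$ and $s$ large, whose $(k,l)$ entries aggregate weighted walks in the subgraph induced on $\alpha$ (alternatively, eliminate the nodes of $\alpha$ one at a time and check that "Laplacian of a connected graph" is preserved at each single-node Kron reduction, which avoids M-matrix inverses altogether). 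Second, there is a degenerate case: if only one node is retained ($|\beta|=1$), the Schur complement is the $1\times 1$ zero matrix, so "diagonal elements $>0$" fails; both your proof (which needs "some other node of $\beta$") and the quoted statement implicitly assume at least two retained nodes. In the paper's application this is harmless, since the lemma is invoked with $|\calI_c|\geq 2$ retained nodes and, for well-definedness of $\bar L_{dd}^{-1}$, only the positive definiteness of principal submatrices matters.
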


In the remainder of this section we discuss Filippov solutions. Let $\funcRdRd$ be a map from $\R^n$ to $\R^n$ and let $2^{\R^n}$ denote the collection of all subsets of $\R^n$. Then, the \emph{Filippov set-valued map} of $\funcRdRd$, denoted $\calF[\funcRdRd]:\R^n\rightarrow 2^{\R^n}$, is defined as
\begin{equation}
\calF[\funcRdRd](x) \triangleq \bigcap_{\delta>0}\bigcap_{\mu(S)=0}\overline{\mathrm{co}}\big\{ \funcRdRd(B(x,\delta)\backslash S) \big\},
\label{eqn_Filippovdef}
\end{equation}
where $S$ is a subset of $\R^n$, $\mu$ denotes the Lebesgue measure and $\overline{\mathrm{co}}\{\calX\}$ denotes the convex closure of a set $\calX$. If $\funcRdRd$ is continuous at $x$, then $ \calF[\funcRdRd](x)$ contains only the point $\funcRdRd(x)$.

Properties of the Filippov set-valued map are stated next.
\begin{prop}[Calculus for $\calF$ \cite{paden1987}]\label{p:calculus for Filippov}
The following properties hold for the Filippov set-valued map (\ref{eqn_Filippovdef}):
	\begin{enumerate}
		\item Assume that $f_j:\R^m\rightarrow \R^{n_j}$, $j=1,\ldots,N$ are locally bounded, then
		\begin{equation}
		\calF\bigg[ \bigtimes_{j=1}^N f_j \bigg](x) \subset \bigtimes_{j=1}^N\calF[f_j](x). \footnote{Cartesian product notation and column vector notation are used interchangeably.}
		\end{equation}
		\item Let $g:\R^m\rightarrow\R^n$ be $C^1$, $\rank Dg(x)=n$ and\footnote{A function is of class $C^1$ if it is continuously differentiable; $Df$ denotes the Jacobian of $f$.} $f:\R^n\rightarrow\R^p$ be locally bounded; then
		\begin{equation}
		\calF[f\circ g](x)=\calF[f](g(x)). 
		\end{equation}
	\end{enumerate}
\end{prop}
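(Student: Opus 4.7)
The plan is to handle the two parts separately, since they rest on different set-valued manipulations. For part~(1), the product rule reduces to three elementary identities about Cartesian products: (i) evaluation distributes, $(\bigtimes_{j} f_j)(A) = \bigtimes_{j} f_j(A)$; (ii) in finite dimensions the closed convex hull of a product satisfies $\overline{\mathrm{co}}(\bigtimes_{j} A_j) = \bigtimes_{j} \overline{\mathrm{co}}(A_j)$; (iii) intersections on products obey only the one-sided inclusion $\bigcap_{\alpha}\bigtimes_{j} A_{j,\alpha} \subseteq \bigtimes_{j}\bigcap_{\alpha} A_{j,\alpha}$, which is precisely what forces the conclusion to be an inclusion rather than an equality. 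Inserting these three identities into the double intersection in~(\ref{eqn_Filippovdef}) then yields the claim; local boundedness of each $f_j$ enters only to keep the closed convex hulls compact.

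For part~(2), I would invoke the constant-rank (submersion) theorem: since $Dg(x)$ has rank $n$, there exist local $C^1$ coordinates near $x$ in which $g$ becomes the canonical projection $\pi:(y_1,\ldots,y_m)\mapsto(y_1,\ldots,y_n)$. Because a $C^1$ diffeomorphism preserves Euclidean balls up to comparable radii and sends Lebesgue-null sets to Lebesgue-null sets, it suffices to establish the reduced identity $\calF[f\circ\pi](x)=\calF[f](\pi(x))$ in these coordinates and then transport it back through the coordinate change.

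For the reduced identity a direct computation gives $\pi(B(x,\delta)) = B(g(x),\delta)$ and
\begin{equation}
\pi(B(x,\delta)\setminus \tilde S) = B(g(x),\delta)\setminus S', \quad S' := \{z : \pi^{-1}(z)\cap B(x,\delta) \subseteq \tilde S\}.
\end{equation}
Fubini's theorem then supplies the two null-set transfers needed to match the inner sets in the two Filippov definitions: any null $\tilde S\subseteq\R^m$ produces a null $S'\subseteq\R^n$, since every fiber of $B(x,\delta)$ over $S'$ has positive $(m-n)$-volume yet lies in $\tilde S$; conversely any null $S'\subseteq\R^n$ lifts to the null set $\tilde S:=\pi^{-1}(S')\cap B(x,\delta)$. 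Taking $f$-images, closed convex hulls, and the defining intersections over $\delta$ and the null sets produces both inclusions. The subtle step, and the main obstacle, is exactly this null-set correspondence across dimensions: it is what guarantees that no extreme point of the Filippov set is lost when passing between $\R^m$-null and $\R^n$-null exclusions, and it is where the rank hypothesis $\rank Dg(x)=n$ is used in an indispensable way, through the constant-rank normal form.
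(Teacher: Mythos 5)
The paper does not prove this proposition at all: it is quoted, with citation, from Paden and Sastry \cite{paden1987}, so there is no in-paper argument to compare against; your sketch is essentially the standard proof from that reference and is sound in structure, but two details need repair. In part (1), the identity $(\bigtimes_j f_j)(A)=\bigtimes_j f_j(A)$ is false: only $(\bigtimes_j f_j)(A)\subseteq\bigtimes_j f_j(A)$ holds, because on the left a single point of $A$ must serve all components simultaneously. That one-sided step, and not the interchange of intersection with product (which over the common index family of pairs $(\delta,S)$ is in fact an equality), is the true source of the inclusion in the statement; fortunately the inclusion points in the direction you need, and since $\overline{\mathrm{co}}$ and the intersections are monotone the corrected chain still yields $\calF[\bigtimes_j f_j](x)\subseteq\bigtimes_j\calF[f_j](x)$. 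In part (2), the reduction via the submersion normal form, the implicit lemma that precomposition with a local $C^1$ diffeomorphism of the source does not change the Filippov set (comparable balls in both directions, null sets preserved by the map and its inverse, monotonicity of $\overline{\mathrm{co}}\,f(\cdot)$), and the fiberwise Fubini transfer of null sets are all the right mechanism. Note only that your $S'=\{z:\pi^{-1}(z)\cap B(x,\delta)\subseteq\tilde S\}$ contains the entire complement of $B(g(x),\delta)$, since empty fibers are vacuously contained in $\tilde S$, so $S'$ is not null as written; replace it by $S'\cap B(g(x),\delta)$, which your Fubini argument does show to be null (passing to a Borel null superset of $\tilde S$ to secure measurability), and observe that this replacement changes nothing in the set difference $B(g(x),\delta)\setminus S'$. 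With these two repairs the argument is complete and consistent with the cited source.
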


A \emph{Filippov solution} of the differential equation $\dot{x}=\funcRdRd(x)$ on $[0,T]\subset\R$ is an absolutely continuous function $x:[0,T]\rightarrow\R^n$ that satisfies the differential inclusion
\begin{equation}\label{e:differential_inclusion}
\dot{x}(t)\in \calF[\funcRdRd](x(t))
\end{equation}
for almost all $t\in[0,T]$. A Filippov solution $t\mapsto x(t)$ is \emph{maximal} if it cannot be extended forward in time, that is, if $t\mapsto x(t)$ is not the result of the truncation of another solution with a larger interval of definition. Since Filippov solutions are not necessarily unique, we need to specify two types of invariant set. A set $\calR\subset\R^n$ is called \emph{weakly invariant} if, for each $x_0\in \calR$, at least one maximal solution of \eqref{e:differential_inclusion} with initial condition $x_0$ is contained in $\calR$. Similarly, $\calR\subset \R^n$ is called \emph{strongly invariant} if, for each $x_0\in \calR$, every maximal solution of \eqref{e:differential_inclusion} with initial condition $x_0$ is contained in $\calR$. For more details, see \cite{cortes2008,filippov1988}.

If $V:\R^n\rightarrow\R$ is locally Lipschitz, then its \emph{generalized gradient} $\partial V:\R^n\rightarrow 2^{\R^n}$ is defined by
\begin{equation}
\partial V(x):=\mathrm{co}\Big\{\lim_{i\rightarrow\infty} \nabla
V(x_i):x_i\rightarrow x, x_i\notin S\cup \Omega_{\funcRdR} \Big\},
\end{equation}
where $\mathrm{co}\{\calX\}$ denotes the convex hull of a set $\calX$, $\nabla$ denotes the gradient operator, $\Omega_{\funcRdR} \subset\R^n$ denotes the set of points where $V$ fails to be differentiable and $S\subset\R^n$ is a set of measure zero that can be
arbitrarily chosen to simplify the computation. Namely, the resulting set $\partial V(x)$ is independent of the choice of $S$ \cite{Clarke1990optimization}.

Given a  set-valued map $\calT:\R^n\rightarrow 2^{\R^n}$, the \emph{set-valued Lie derivative} $\calL_{\calT}V:\R^n\rightarrow 2^{\R^n}$ of a locally Lipschitz function $V:\R^n\rightarrow \R$  with respect to $\calT$ at $x$ is defined as
\begin{equation}\label{e:set-valuedLie}
\begin{aligned}
\calL_{\calT}V(x) := & \big\{ a\in\R \mid \exists\nu\in\calT(x) \textnormal{ such that } \\
& \quad \zeta^T\nu=a,\, \forall \zeta\in \partial V(x)\big\}.
\end{aligned}
\end{equation}
If $\calT(x)$ is convex and compact $\forall x\in\R^n$, then $\calL_{\calT}V(x)$ is a closed and bounded interval in $\R$, possibly empty, for each $x$.

The following result is a generalization of LaSalle's invariance principle for discontinuous differential equations \eqref{e:differential_inclusion} with non-smooth Lyapunov functions.
\begin{thm}[LaSalle's Invariance Principle \cite{Cortes2006}]\label{chap_preli:thm_stability}
Let $V:\R^n\rightarrow\R$ be a locally Lipschitz and regular function\footnote{The definition of a regular function can be found in \cite{Clarke1990optimization} and we recall that any convex function is regular.}. Let $S\subset \R^n$ be compact and strongly invariant for \eqref{e:differential_inclusion} and assume that $\max \calL_{\calF[f]} V(x)\leq 0$ for all $x\in S$, where we define $\max\emptyset=-\infty$. Let
\begin{equation}
Z_{\calF[f],V}= \big\{ x\in\R^n \;\big|\; 0\in\calL_{\mathcal{F}[f]}V(x) \big\}.
\end{equation}
Then, all solutions $x:[0,\infty)\rightarrow \R^n$ of \eqref{e:differential_inclusion} with $x(0)\in S$ converge to the largest weakly invariant set $M$ contained in
\begin{equation}
S\cap\overline{Z_{\calF[f],V}}.
\end{equation}
Moreover, if $M$ consists of a finite number of points, then the limit of each solution starting in $S$ exists and is an element of~$M$.
\end{thm}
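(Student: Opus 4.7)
The plan is to follow the classical LaSalle scheme, replacing the ordinary derivative of $V$ along trajectories by the set-valued Lie derivative $\calL_{\calF[f]}V$. First, I would invoke the nonsmooth chain rule for locally Lipschitz, regular $V$ composed with an absolutely continuous Filippov solution $x(\cdot)$: the map $t\mapsto V(x(t))$ is absolutely continuous, and wherever its derivative exists it lies in $\calL_{\calF[f]}V(x(t))$. Combined with the hypothesis $\max \calL_{\calF[f]}V \leq 0$ on $S$ and strong invariance of $S$, this shows that $V\circ x$ is non-increasing along any trajectory starting in $S$.

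Second, because $S$ is compact, $V\circ x$ is bounded below and therefore converges to some limit $V^\star$. The trajectory remains in the compact set $S$, so its $\omega$-limit set $\Omega(x_0)$ is non-empty, compact, contained in $S$, and weakly invariant, the latter being a standard consequence of upper semicontinuity and compact convex values of $\calF[f]$ together with sequential closure properties of Filippov solutions. Continuity of $V$ together with $V(x(t))\to V^\star$ then forces $V\equiv V^\star$ on $\Omega(x_0)$. Pick any $y\in\Omega(x_0)$ and, by weak invariance, a Filippov solution $\tilde x$ with $\tilde x(0)=y$ that stays in $\Omega(x_0)$; since $V\circ\tilde x$ is constant, its a.e.\ derivative vanishes and the chain-rule inclusion gives $0\in\calL_{\calF[f]}V(\tilde x(t))$ for a.e.\ $t$. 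Continuity of $\tilde x$ and closedness of $\overline{Z_{\calF[f],V}}$ then yield $y\in\overline{Z_{\calF[f],V}}$.

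Therefore $\Omega(x_0)\subset S\cap\overline{Z_{\calF[f],V}}$, and since $\Omega(x_0)$ is itself weakly invariant it is contained in the largest weakly invariant subset $M$ of $S\cap\overline{Z_{\calF[f],V}}$. Convergence of $x(\cdot)$ to $M$ then follows from the standard fact that a trajectory with relatively compact range converges to its own $\omega$-limit set. For the final assertion, $\omega$-limit sets of Filippov trajectories with relatively compact range are connected, so a finite $M$ forces $\Omega(x_0)$ to be a singleton and $x(t)$ converges to that point.

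The main obstacle is the two nonsmooth ingredients that must replace their smooth counterparts: justifying the chain-rule inclusion $\dot V(x(t))\in\calL_{\calF[f]}V(x(t))$, which crucially uses the regularity hypothesis on $V$ so that $\partial V$ behaves well under limits, and establishing weak invariance of $\omega$-limit sets for Filippov solutions, which relies on upper semicontinuity and compact convex values of $\calF[f]$. Once these two facts are in hand, the rest of the argument parallels the classical proof.
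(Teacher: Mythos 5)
Your sketch is correct and follows essentially the same route as the source of this result: the paper does not prove Theorem~\ref{chap_preli:thm_stability} but quotes it from \cite{Cortes2006}, whose proof is exactly the argument you outline — the chain-rule inclusion $\tfrac{d}{dt}V(x(t))\in\calL_{\calF[f]}V(x(t))$ a.e.\ for locally Lipschitz regular $V$ (Lemma~1 of \cite{Bacciotti1999}), monotonicity of $V\circ x$ on the compact strongly invariant set $S$, weak invariance and constancy of $V$ on the $\omega$-limit set, placing it inside $S\cap\overline{Z_{\calF[f],V}}$ and hence in $M$, with connectedness of the $\omega$-limit set handling the finite-$M$ case. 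The two ingredients you flag as the main obstacles are precisely the standard lemmas invoked there, so no gap remains beyond citing them.
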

A result on finite-time convergence for \eqref{e:differential_inclusion} is stated next, which will form the basis for our results on finite-time consensus for multi-agent systems.
\begin{lem}[\cite{Cortes2006}]\label{p:finite_time_convergence}
Under the same assumptions as in Theorem \ref{chap_preli:thm_stability}, if $\max \calL_{\calF[f]} V(x)<\varepsilon<0$ a.e.\ on $S\setminus Z_{\calF[f],V}$, then $Z_{\calF[f],V}$ is attained in finite time.
\end{lem}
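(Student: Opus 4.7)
The plan is a contradiction argument based on the chain rule for the locally Lipschitz function $V$ along a Filippov solution: combine a pointwise strict negative upper bound on $\tfrac{d}{dt}V(x(\cdot))$ with the lower bound on $V$ forced by compactness of $S$ to conclude that any trajectory must leave $S\setminus Z_{\calF[f],V}$ within a finite time bounded by $(V(x_0)-\min_S V)/|\varepsilon|$.

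First I would invoke the standard chain rule for Lipschitz regular Lyapunov functions along Filippov solutions (as proved e.g.\ in the references preceding Theorem~\ref{chap_preli:thm_stability}): for any Filippov solution $x:[0,\infty)\to\R^n$ of \eqref{e:differential_inclusion} and any locally Lipschitz regular $V$, the composition $V\circ x$ is absolutely continuous and
\begin{equation*}
\tfrac{d}{dt}V(x(t))\in\calL_{\calF[f]}V(x(t))
\end{equation*}
for almost every $t$. By strong invariance of $S$ we have $x(t)\in S$ for all $t\ge 0$. Assume, toward a contradiction, that $x(t)\in S\setminus Z_{\calF[f],V}$ for every $t\ge 0$. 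The hypothesis then yields
\begin{equation*}
\tfrac{d}{dt}V(x(t))\le \max\calL_{\calF[f]}V(x(t))<\varepsilon
\end{equation*}
for almost every $t\ge 0$, so integrating gives $V(x(T))\le V(x_0)+\varepsilon T$ for all $T\ge 0$.

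Since $S$ is compact and $V$ continuous, $V_{\min}:=\min_{y\in S}V(y)$ is finite. Choosing $T>(V(x_0)-V_{\min})/|\varepsilon|$ would force $V(x(T))<V_{\min}$, contradicting $x(T)\in S$. Hence there exists $\tau\le (V(x_0)-V_{\min})/|\varepsilon|$ with $x(\tau)\in Z_{\calF[f],V}$, so $Z_{\calF[f],V}$ is attained in finite time, with a uniform time bound depending only on $V(x_0)$ and $V_{\min}$.

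The main obstacle is making rigorous the passage from ``a.e.\ on $S\setminus Z_{\calF[f],V}$'' (Lebesgue measure in state space) to ``a.e.\ in $t$'' along the trajectory. The standard resolution is that an absolutely continuous curve $x(\cdot)$ cannot spend positive-measure time on a Lebesgue null set of $\R^n$ unless it is locally constant there; on any such interval of constancy, $\tfrac{d}{dt}V(x(t))=0$ forces $0\in\calL_{\calF[f]}V(x(t))$, i.e.\ $x(t)\in Z_{\calF[f],V}$, contradicting the standing assumption. Hence the strict negative bound on the set-valued Lie derivative does transfer to an a.e.-in-$t$ inequality, and the integration argument above closes.
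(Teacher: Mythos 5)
Your overall strategy (chain rule for the regular Lipschitz function $V$ along Filippov solutions, integration of the strict decrease, contradiction with compactness of $S$) is the standard argument behind this lemma, which the paper itself does not prove but quotes from \cite{Cortes2006}; so the question is only whether your treatment of the acknowledged obstacle is sound, and it is not. The claim you use to pass from ``a.e.\ on $S\setminus Z_{\calF[f],V}$'' (Lebesgue measure in $\R^n$) to ``a.e.\ in $t$'' --- that an absolutely continuous curve cannot spend positive-measure time in a Lebesgue-null subset of $\R^n$ unless it is locally constant there --- is false for $n\geq 2$: the curve $x(t)=(t,0)$ spends all of its time in the null set $\R\times\{0\}$ with unit speed. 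Worse, this is not a corner case but precisely the regime the lemma is designed for: Filippov solutions of signum-type systems generically exhibit sliding modes, i.e.\ they move with nonzero velocity inside measure-zero discontinuity surfaces (compare Example~\ref{example_undi_3nodes}, where solutions evolve inside the null set $\calC$). Consequently your conclusion that time spent in the exceptional null set forces $0\in\calL_{\calF[f]}V(x(t))$ does not follow; if that null set contains a sliding surface, the standing hypotheses only give $\tfrac{d}{dt}V(x(t))\leq 0$ there, which yields monotonicity of $V$ but not the uniform rate $\varepsilon$ needed for a finite-time bound. (Your patch would work for $k n=1$, where $\dot{x}=0$ a.e.\ on the preimage of a null set, but not in the multi-dimensional setting of this paper.)

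To close the gap you need the strict bound at the points actually visited by the trajectory, not merely at Lebesgue-almost every point of $S\setminus Z_{\calF[f],V}$. One way is to read the hypothesis pointwise on $S\setminus Z_{\calF[f],V}$, observing that at points where $\calL_{\calF[f]}V(x)=\emptyset$ the bound holds vacuously because $\max\emptyset=-\infty$; this pointwise form is exactly what is available when the lemma is invoked in the proof of Theorem~\ref{th:main_vector}, where $\max\calL_{\calF[h]}V(x)\leq -c$ is established for every $x\notin\calC$ and $Z_{\calF[h],V}\subset\calC$. Alternatively, one must argue for the specific system at hand that no solution can occupy the exceptional null set on a set of times of positive measure. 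Once one knows $\tfrac{d}{dt}V(x(t))\leq\varepsilon$ for a.e.\ $t$ with $x(t)\notin Z_{\calF[f],V}$, your integration and compactness step, including the explicit hitting-time bound $(V(x_0)-\min_{S}V)/|\varepsilon|$, is correct.
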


\section{Problem formulation}\label{s:problem formulation}
Consider the nonlinear multi-agent system
\begin{equation}\label{e:plant}
\dot x_i = u_i, \quad i\in\calI =\{1,2,\ldots,n\},
\end{equation}
defined on a connected network $\calG$ with $n$ agents and $m$ edges, i.e., $|\calE|=m$, where $x_i(t),\, u_i(t) \in \R^k$ are the state and the input of agent $i$ at time $t$, respectively. 

By defining the consensus space as
\begin{align}
\calC = \big\{ x\in\R^{kn} \mid \exists \bar{x}\in\R^k \textnormal{ such that } x = \mathds{1}\otimes \bar{x} \big\},
\end{align}
we say the states converge to consensus in finite time if for any initial condition there exists a time $t^*>0$ such that $x=[x_1,\ldots,x_n]^\top$ converge to a \emph{static} vector in $\calC$ as $t\rightarrow t^*$, i.e., there exists a vector $\bar{x}$ such that $x(t) = \mathds{1}\otimes\bar{x}$ for all $t\geq t^*$. In this paper, we design control inputs $u_i$ such that the states of the system \eqref{e:plant} converge to consensus in finite time. We understand the trajectories of the system in the sense of Filippov. We formally formulate our objective as follows. 

\textbf{Aim \ }Design control protocols to the system \eqref{e:plant} such that all Filippov solutions converge to a \emph{static} vector in $\calC$ in finite time.

\section{Main results}\label{s:vector}
For the design of the control input $u_i$, we start with a general nonlinear form 
\begin{equation}\label{e:control}
u_i = f_i\!\Bigg(\sum_{j=1}^n a_{ij}(x_j-x_i)\Bigg), \quad i\in\calI,
\end{equation}
where $f_i:\R^k\to \R^k$ and $a_{ij}$ is the $ij$-th elements of the adjacency matrix $A$. 
Combining \eqref{e:plant} and \eqref{e:control}, the closed-loop system is given as  
\begin{equation}\label{e:generalsystem}
\dot x_i = f_i\!\Bigg(\sum_{j=1}^n a_{ij}(x_j-x_i)\Bigg), \quad i\in\calI.
\end{equation}
Denoting $\bar{L}=L\otimes I_k$ and $\bar{L}_i=L_{i,\cdot}\otimes I_k$, \eqref{e:generalsystem} can be further written in a compact form as
\begin{equation}\label{e:nonlinear_vector}
\dot{x}=f(-\bar{L}x)
\end{equation}
in which $x=[x^\top_1, x^\top_2, \ldots, x^\top_n]^\top\in\R^{kn}$ collects the states of all agents and $f(y) = [f_1^\top(y_1), \ldots, f_n^\top(y_n)]^\top$.

In light of the success of scalar binary control protocols to achieve finite-time consensus \cite{Chen2011,Cortes2006,Hui2010}, we shall design control protocols based on the signum function for multi-dimensional multi-agent systems.

We consider both the direction-preserving signum \eqref{e:sign} and component-wise signum \eqref{e:signc} functions. Specifically, we denote the direction-preserving signum function as
\begin{equation}\label{e:sign}
\sign(w) = \begin{cases}
\frac{w}{\|w\|_p} & \textrm{ if } w\neq \mathbf{0},\\
0 & \textrm{ if } w=\mathbf{0},
\end{cases}
\end{equation}
for $w\in\R^k$ and any $p\in[1,\infty]$. Moreover, the component-wise signum function is given as
\begin{equation}\label{e:signc}
\sign_c(w)=[\begin{array}{ccc}\sign(w_1) & \cdots & \sign(w_k)\end{array}]^{\T}. 
\end{equation}
Notice that, for $k=1$, these two signum functions coincide. Furthermore, the component-wise signum is coarser than its direction-preserving counterpart in the sense that there is only a finite number of elements in the range of $\sign_c$ for a fixed dimension $k$. A graphical comparison between these two functions can be found in Fig.~\ref{fig:sign}.

In the following two subsections, we propose control protocols based on \eqref{e:sign} and \eqref{e:signc}, respectively, and derive convergence results. 

\begin{figure}
\centering
\begin{tikzpicture}[scale=1.2]
    \draw [<->,thick] (0,1.5) node (yaxis) [above] {$y$}
        |- (2,0) node (xaxis) [right] {$x$};
    \draw [->, thick] (0,0)  -- (1,1)  node (sign) [above] {$\sign_c(\phi)$};
    \draw [->, thick] (0,0)  -- (1.5,0.5)  node (phi) [right] {$\phi$};
    \draw [dashed] (0,1) node (01) [left] {$(0,1)$} -- (1,1) coordinate (b_2);
    \draw [dashed] (1,0) node (10) [below] {$(1,0)$} -- (1,1) coordinate (b_2);
\end{tikzpicture}
\begin{tikzpicture}[scale=1.2]
    \draw [<->,thick] (0,1.5) node (yaxis) [above] {$y$}
        |- (2,0) node (xaxis) [right] {$x$};
    \draw [->, thick] (0,0)  -- (1.5,0.5)  node (phi) [right] {$\phi$};
    \draw [dashed] (1,0) node (10) [below] {$(1,0)$} arc (0:90:1) (0,1) node (01) [left] {$(0,1)$};
    \draw [->, thick] (0,0)  -- (0.9487,0.3162)  node (sign) [left] {\tiny{$\sign(\phi)$}};
\end{tikzpicture}
\caption{The difference between component-wise signum $\sign_c$ and the direction-preserving signum \eqref{e:sign} with $\ell_2$-norm in $\R^2$. Left: the vector $\sign_c(\phi)=(1,1)$ is the component-wise sign of $\phi$ in the first quadrant. Right: the vectorized sign of $\phi$ reside on the unit circle.}\label{fig:sign}
\end{figure}
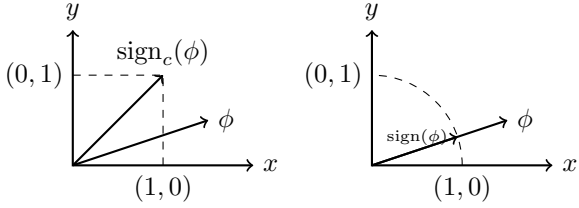

\subsection{Direction-preserving signum}\label{s:direction-preserving sign}
In this subsection, we consider the nonlinear controller $u_i$ as in \eqref{e:control} with some  $f_i=\sign$ as in \eqref{e:sign}. One intuitive idea is to employ the controller 
\begin{equation}\label{e:control_false}
u_i = \sign\Bigg(\sum_{j=1}^n a_{ij}(x_j-x_i)\Bigg), \quad i\in\calI,
\end{equation}
such that all the functions $f_i$ in \eqref{e:control} are signum functions. However, as pointed out in~\cite{Wei2015}, if $x_i\in\R$ and $f_i=\sign$ for all $i\in\calI$, consensus in the sense of this paper is not possible for \eqref{e:nonlinear_vector}, even asymptotically. Indeed, the trajectories will not converge to a \emph{static} vector in $\calC$. We explain this phenomenon by recalling an example from \cite{Wei2015}, which also serves as an counterexample to the result in Section~4 of \cite{Cortes2006}.  

\begin{exmp}\cite{Wei2015}\label{example_undi_3nodes}
Consider
\begin{align*}
\dot{x}_1 &= \sign(x_2+x_3-2x_1), \\
\dot{x}_2 &= \sign(x_1+x_3-2x_2), \\
\dot{x}_3 &= \sign(x_1+x_2-2x_3),
\end{align*}
defined on a circular graph with three nodes and all edges with unit weight, and $x_i\in\R$. In this case $\calC= \spa\{\one_3\}$. We can show that for any initial condition $x(0)\notin\calC$, all Filippov solutions converge to $\calC$ in finite time. However, once they enter $\calC$, the solutions can be unbounded. Indeed, suppose that at time $t_0$ we have $x(t_0) \in\calC$, then
\begin{equation}\label{e:filippovset-example}
\calF[h](x(t_0))=\overline{\mathrm{co}}\left\{
\nu_1,\nu_2,\nu_3,-\nu_1,-\nu_2,-\nu_3 \right\},
\end{equation}
where $\nu_1=[1,1,-1]^T$, $\nu_2=[1,-1,1]^T$, and $\nu_3=[-1,1,1]^T$. Since
$\sum_{i=1}^3 \frac{1}{3} \nu_i =\frac{1}{3}\one$, we have that
\begin{align}
\big\{\eta\one \;\big|\; \eta\in[-\tfrac{1}{3},\tfrac{1}{3}]\big\} \subset \calF[h](x(t_0)).
\end{align}
Hence, any function $x(t)=\eta(t)\one$ with $\eta(t)$ differentiable almost everywhere and satisfying $\dot{\eta}(t)\in[-\tfrac{1}{3},\tfrac{1}{3}]$ for all $t>t_0$ is a Filippov solution for this system. Consequently, not all Filippov solutions converge to a static vector in $\calC$.
\end{exmp}

For the scalar version of \eqref{e:plant} with controller \eqref{e:control_false}, Theorem~7 in \cite{Wei2015} provides a guarantee for \emph{asymptotic} convergence to consensus. The main result in Theorem~7 \cite{Wei2015} relies on replacing one $\sign$ function with a function that is Lipschitz-continuous at the origin. This result prompts us to the following assumption on $f_i, i\in\calI$.

\begin{assum}\label{ass_vectorfield_1}
For some set $\calI_c\subset\calI$, the function $f$ in~(\ref{e:nonlinear_vector}) satisfies the following conditions:
	\begin{enumerate}
		\item[\textit{(i)}] For $i\in\calI_c$, the function $f_i:\R^k\rightarrow\R^k$ is locally Lipschitz continuous and satisfies $f_i(\mathbf{0}) = \mathbf{0}$ and $f_i(y)^\top y=\|f_i(y)\|\cdot\|y\|> 0$ for all $y\neq \mathbf{0}$ (i.e., the functions $f_i$ are direction preserving); 
		\item[\textit{(ii)}] For $i\in\calI\backslash\calI_c$, the function $f_i$ is the direction-preserving signum, i.e., $f_i = \sign$.
	\end{enumerate}
\end{assum}

\begin{rem}
For the scalar case, \emph{direction preserving} in Assumption \ref{ass_vectorfield_1} is simply \emph{sign preserving}, i.e., $f_i(0) = 0$ and $f_i(y_i) y_i> 0$ for all $y_i\neq 0$. 
\end{rem}
Notice that the $\sign$ function in (\ref{e:sign}) is locally Lipschitz and direction preserving on $\R^k\setminus\{\mathbf{0}\}$, and for any $w\neq \mathbf{0}$ we have $\|\sign(w)\|_p=1$. 

To handle the discontinuities in $f$ that arise from the signum function in Assumption~\ref{ass_vectorfield_1}, we understand the solution of \eqref{e:nonlinear_vector} in the sense of Filippov, i.e., we consider the differential inclusion
\begin{equation}\label{e:nonlinear_vector_fili}
\dot{x}\in \calF[h](x),
\end{equation}
where $h=f(-\bar{L}x)$.

\medskip

So far, one could expect that the finite-time convergence of system~(\ref{e:nonlinear_vector_fili}) to consensus hinges upon the conditions of $\calI$ and~$\calI_c$. We start the analysis with the special case with $\calI_c = \calI$. It is well-known that Lipschitz continuous vector fields give mere asymptotic convergence, but this result is stated and proven explicitly for completeness. 
\begin{prop}\label{prop: asymptotic_convergence}
Consider the nonlinear consensus protocol \eqref{e:nonlinear_vector} satisfying Assumption~\ref{ass_vectorfield_1} with $\calI_c = \calI$. Then, the (unique) solution of \eqref{e:nonlinear_vector}
converges to consensus only asymptotically, i.e., finite-time consensus is not achieved.
\end{prop}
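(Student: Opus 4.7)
The plan is to combine a standard Lyapunov/LaSalle argument (which yields asymptotic convergence to $\calC$) with a backward-uniqueness argument (which rules out reaching $\calC$ in finite time), exploiting that when $\calI_c=\calI$ the closed-loop vector field is globally locally Lipschitz.

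First I would verify well-posedness: since each $f_i$ is locally Lipschitz and $\bar L$ is linear, the map $h(x):=f(-\bar L x)$ is locally Lipschitz on $\R^{kn}$, so $\dot x = h(x)$ admits a unique maximal solution for every initial condition. For asymptotic convergence, take the candidate $V(x) = \tfrac12 x^\top \bar L x$ and set $y := -\bar L x$. Using $\bar L = \bar L^\top$ together with Assumption~\ref{ass_vectorfield_1}(i),
\begin{equation*}
\dot V = x^\top \bar L f(-\bar L x) = -y^\top f(y) = -\sum_{i\in\calI} y_i^\top f_i(y_i) \leq 0,
\end{equation*}
with equality iff $y_i = \mathbf{0}$ for every $i$, i.e.\ $x\in\calC$. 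Hence the disagreement component of $x$ is uniformly bounded, $\bar L x$ is bounded, and in turn $\dot x$ is bounded, so the solution extends to $[0,\infty)$. Applying LaSalle's invariance principle (Theorem~\ref{chap_preli:thm_stability}) on a sublevel set of $V$---quotiented by the consensus direction, on which $h \equiv \mathbf{0}$ since $f_i(\mathbf{0}) = \mathbf{0}$---yields $x(t)\to\calC$ as $t\to\infty$.

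To preclude finite-time convergence, suppose for contradiction that $x(0)\notin\calC$ but $x(t^*)\in\calC$ for some $t^*<\infty$. Since $f_i(\mathbf{0}) = \mathbf{0}$ for every $i$, $h(x(t^*)) = f(\mathbf{0}) = \mathbf{0}$, so $x(t^*)$ is an equilibrium of $\dot x = h(x)$. Both the constant curve $t\mapsto x(t^*)$ and the trajectory $t\mapsto x(t)$ solve the locally Lipschitz ODE and coincide at $t=t^*$; backward uniqueness (Picard--Lindel\"of) then forces $x(t) = x(t^*)\in\calC$ for all $t\in[0,t^*]$, contradicting $x(0)\notin\calC$.

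The main obstacle is a small bookkeeping point: $V$ controls only the disagreement component of $x$, while the mean $\tfrac{1}{n}\sum_i x_i$ may drift along $\calC$, so LaSalle must be applied modulo that direction. Beyond this the argument is routine; the essential observation is that backward uniqueness forbids finite-time convergence precisely when the vector field is Lipschitz at the target set---the mechanism that fails (and thereby permits finite-time behaviour) for the discontinuous protocols treated in the remainder of the paper.
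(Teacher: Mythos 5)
Your argument for the impossibility of finite-time consensus is exactly the paper's proof: local Lipschitz continuity of $h(x)=f(-\bar L x)$ gives uniqueness (in particular backward in time) by Picard--Lindel\"of, and since $f_i(\mathbf{0})=\mathbf{0}$ any consensus point is an equilibrium, so a trajectory reaching $\calC$ at a finite time would have to coincide with the constant solution, contradicting $x(0)\notin\calC$. The additional Lyapunov/LaSalle half (with $V=\tfrac12 x^\top\bar L x$ and the quotient by the consensus direction) goes beyond what the paper proves here---the paper simply treats asymptotic convergence as well known and only establishes the finite-time impossibility---and while your sketch is sound in spirit, making it rigorous would be cleanest via the self-contained disagreement dynamics $\dot y=-\bar L f(y)$, $y=-\bar L x$, where the relevant sublevel sets are compact.
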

\begin{proof}
Before setting up an argument by contradiction, note that the (local) Lipschitz continuity of $f_i$ implies that $h(x) = f(-\bar{L}x)$ is also locally Lipschitz continuous. As a result of the Picard-Lindel\"{o}f theorem, this implies that, for a given initial condition, the solution of (\ref{e:nonlinear_vector}) is unique forward and backward in time.

Now, in order to establish a contradiction, let $x(t)$ be a solution, with $x(0)\notin\calC$, of (\ref{e:nonlinear_vector}) that achieves consensus in finite time, i.e., there exists a time $t_0<\infty$ such that $x(t_0) = \one\otimes\eta$ for some vector $\eta\in\R^k$ and $x(t) \neq \one\otimes\eta$ for all $t<t_0$. Now, by the observation that $h(\one\otimes\eta) = \mathbf{0}$, it follows that $\bar{x}(t) = \one\otimes\eta$ for all $t$ is also a solution of (\ref{e:nonlinear_vector}), contradicting the uniqueness of solutions. Consequently, for $\calI_c = \calI$, (\ref{e:nonlinear_vector}) will not achieve consensus in finite time. 
\end{proof}

One key property of the system \eqref{e:plant} with controller \eqref{e:control} satisfying Assumption \ref{ass_vectorfield_1} is that any bounded ball in the $\ell_p$-norm is strongly invariant. This is formulated in the following lemma.

\begin{lem}\label{lm:invariant_vec}
Consider the differential inclusion (\ref{e:nonlinear_vector_fili}) satisfying Assumption \ref{ass_vectorfield_1} for some $p\in[0,\infty]$. If one of the following two conditions is satisfied
\begin{enumerate}
\item[\textit{(i)}] $|\calI| = 2$ and $|\calI_c| = 0$;
\item[\textit{(ii)}] $|\calI| \geq 2$ and $|\calI_c| \geq 1$,
\end{enumerate}
then the set $\calS_p(C) = \{ x\in\R^{nk}\mid \|x_i\|_p\leq C, i\in\calI \}$, where $C>0$ is a constant, is strongly invariant.
\end{lem}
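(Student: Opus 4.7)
The plan is to establish strong invariance of $\calS_p(C)$ via a non-smooth Lyapunov argument with the candidate
$V(x)=\max_{i\in\calI}\|x_i\|_p$.
This $V$ is convex, hence locally Lipschitz and regular, and $\calS_p(C)=\{V\leq C\}$. By the generalized chain rule for regular functions along absolutely continuous curves, strong invariance reduces to the pointwise estimate $\max\calL_{\calF[h]}V(x)\leq 0$ everywhere (adopting the convention $\max\emptyset=-\infty$), from which $V(x(t))$ is non-increasing along every Filippov solution.

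First I would compute $\partial V(x)$ by Danskin's formula, obtaining $\partial V(x)=\co\bigcup_{i\in I^*(x)}\partial V_i(x)$ with $V_i(x)=\|x_i\|_p$ and $I^*(x)=\{i:\|x_i\|_p=V(x)\}$. Each $\xi\in\partial\|\cdot\|_p(x_i)$ satisfies the $\ell_p$--$\ell_q$ duality identities $\xi^\top x_i=\|x_i\|_p$ and $\|\xi\|_q\leq 1$, so $\xi^\top x_j\leq V(x)$ for every $j$, and consequently
$\xi^\top y_i=\sum_j a_{ij}(\xi^\top x_j-V(x))\leq 0$,
where $y_i=\sum_j a_{ij}(x_j-x_i)$. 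Unfolding $\calL_{\calF[h]}V$ against the block structure of $\partial V$, a scalar $a\in\calL_{\calF[h]}V(x)$ obliges $\xi^\top\nu_i=a$ for every $i\in I^*(x)$ and every $\xi\in\partial\|\cdot\|_p(x_i)$, with $\nu\in\calF[h](x)$ common. For $i\in\calI_c$, or for $i\in\calI\setminus\calI_c$ with $y_i\neq 0$, the direction-preserving form of $f_i$ in Assumption~\ref{ass_vectorfield_1} makes $\nu_i$ a non-negative multiple of $y_i$, so $\xi^\top\nu_i\leq 0$ and hence $a\leq 0$ at such points.

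The main obstacle is the degenerate scenario in which every $i\in I^*(x)$ lies in $\calI\setminus\calI_c$ and satisfies $y_i=0$, since then $\calF[\sign](0)$ is the entire $\ell_p$-unit ball and the componentwise bound $\calF[h]\subset\bigtimes_i\calF[f_i](y_i)$ from Proposition~\ref{p:calculus for Filippov} is too loose to exclude outward velocities. I would dispatch this case by invoking the joint Filippov set together with the lemma's hypotheses. Under (i), $y_1=-y_2=a_{12}(x_2-x_1)$, so $y_1=0$ implies $x_1=x_2$; computing the limits of $h$ through off-diagonal perturbations then yields $\calF[h](x)=\{(v,-v):\|v\|_p\leq 1\}$, and the requirements $\xi^\top v=a=\xi^\top(-v)$ against the common subgradient $\xi$ collapse to $a=0$. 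Under (ii), I would argue by propagation along the graph: the equality case of the Minkowski inequality applied to $x_i=d_i^{-1}\sum_j a_{ij}x_j$ with $\|x_i\|_p=C$ and $\|x_j\|_p\leq C$ forces each neighbour $j$ of $i$ to satisfy $\|x_j\|_p=C$, hence $j\in I^*(x)$; iterating along a path in the connected graph $\calG$ would drag a node of $\calI_c$ into $I^*(x)$, contradicting $I^*(x)\cap\calI_c=\emptyset$. Hence in every admissible configuration the non-degenerate bound applies, $\max\calL_{\calF[h]}V\leq 0$ holds, and $\calS_p(C)$ is strongly invariant.
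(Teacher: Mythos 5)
Your proposal is correct and takes essentially the same route as the paper's proof: a max-of-$\ell_p$-norms nonsmooth Lyapunov function, the componentwise Filippov estimate $\calF[h](x)\subset\bigtimes_i\calF[f_i](-\bar{L}_ix)$, a H\"older/duality bound showing every element of the set-valued Lie derivative is nonpositive when some maximizing agent has pinned velocity, and a connectivity (propagation) contradiction plus the two-agent antisymmetry $\nu_1=-\nu_2$ for the degenerate configurations — all of which mirror the paper's split between $x\in\calC$ and $x\notin\calC$. The only real difference is in execution: using $V(x)=\max_i\|x_i\|_p$ with the duality characterization $\xi^\top x_i=\|x_i\|_p$, $\|\xi\|_q\le 1$ treats $p\in[1,\infty)$ and $p=\infty$ uniformly, whereas the paper works with $\max_i\tfrac1p\|x_i\|_p^p$ and handles $p=\infty$ in a separate case.
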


\begin{proof}
We divide the proof into two parts, discussing the cases $p\in [1,\infty)$ and $p=\infty$ separately.

\textit{(1)}. Let $p\in [1,\infty)$. We introduce a Lyapunov function candidate
\begin{align}
V(x) = \max_{i\in\calI} \frac{1}{p}\|x_i\|_p^p
\label{eqn_prf_lm:invariant_V}
\end{align}
and note that $V(x)\leq \frac{1}{p}C^p$ implies that $x\in\calS_p(C)$. Since the function $(\cdot)^p$ is convex on $\R_{\geq 0}$, it can be observed that $V$ is convex and, hence, regular. In the remainder of the proof, we will show that $V(x(t))$ is non-increasing along all Filippov solutions of (\ref{e:nonlinear_vector_fili}), implying strong invariance of the set $\calS_p(C)$ for any $C>0$.

Let $\alpha(x)$ denote the set of indices that achieve the maximum in (\ref{eqn_prf_lm:invariant_V}) as
\begin{align}
\alpha(x) = \big\{ i\in\calI \mid \|x_i\|_p^p = pV(x) \big\}.
\label{eqn_prf_lm:invariant_alpha}
\end{align}	
Then, the generalized gradient of $V$ in (\ref{eqn_prf_lm:invariant_V}) is given by
\begin{equation}
\partial V(x)=\mathrm{co} \big\{ e_i\otimes \psi(x_i) \mid i\in\alpha(x) \big\}
\label{eqn_prf_lm:invariant_gengrad}
\end{equation}
where 
\begin{align}\label{e:psi}
\psi(x_i) = \left[\begin{array}{c}
|x_{i,1}|^{p-1}\calF[\sign](x_{i,1}) \\ \vdots \\
|x_{i,k}|^{p-1}\calF[\sign](x_{i,k})\end{array}\right]
\end{align}
and $x_i = [\begin{array}{ccc} x_{i,1} & \ldots & x_{i,k}\end{array}]^{\T}\in\R^k$.

Next, let $\Psi$ be defined as
\begin{equation}
\Psi = \big\{ t\geq 0 \mid  \dot{x}(t) \textnormal{ and } \tfrac{d}{dt}V(x(t)) \textnormal{ exist} \big\}.
\end{equation}
Since $x$ is absolutely continuous (by definition of Filippov solutions) and $V$ is locally Lipschitz, by Lemma~1 in~\cite{Bacciotti1999} it follows that $\Psi=\R_{\geq 0}\setminus\bar{\Psi}$ for a set $\bar{\Psi}$ of measure zero and
\begin{equation}
\frac{d}{dt}V(x(t))\in \calL_{\calF[h]}V(x(t))
\end{equation}
for all $t\in\Psi$, such that the set $\calL_{\calF[h]}V(x(t))$ is nonempty for all $t\in\Psi$. For $t\in\bar{\Psi}$, we have that $\calL_{\calF[h]}V(x(t))$ is empty, and hence $\max \calL_{\calF[h]}V(x(t)) = -\infty < 0$ by definition. Therefore, we only consider $t\in\Psi$ in the rest of the proof.
	
Next, we will consider the cases $x\in\calC$ and $x\notin\calC$ separately.

First, for $x\in\calC$, it can be observed that $\alpha(x) = \calI$. Then, the following two cases can be distinguished.
\begin{enumerate}
	\item[\textit{(i)}] $|\calI| \geq 2$ and $|\calI_c|\geq 1$. As there is at least one agent with continuous vector field, there exists $i\in\calI$ such that $f_i$ is locally Lipschitz and direction preserving. Then, by definition of the Filippov set-valued map, it follows that $\nu_i = 0$ for all $\nu\in\calF[h](x)$ (recall $x\in\calC$). As $\calL_{\calF[h]}V(x(t))$ is nonempty (by considering $t\in\Psi$), there exists $a\in\calL_{\calF[h]}V(x(t))$ such that $a = \zeta^\top\nu$ for all $\zeta\in\partial V(x(t))$, see the definition (\ref{e:set-valuedLie}). Choosing $\zeta = e_i\otimes \psi(x_i(t))$, it follows that $a = (e_i\otimes \psi(x_i(t)))^\top\nu = 0$, which implies that $\max\calL_{\calF[h]}V(x(t)) = 0 \leq 0$, i.e., $V(x)$ is non-increasing for any $x\in\calC$.
	
	\item[\textit{(ii)}] $|\calI| = 2$ and $|\calI_c| = 0$. In this case, the system (\ref{e:nonlinear_vector}) can be written as
	\begin{equation}
	\begin{aligned}
	\dot{x}_1 & = \frac{x_2-x_1}{\|x_2-x_1\|_p}, \\
	\dot{x}_2 & = \frac{x_1-x_2}{\|x_1-x_2\|_p}.
	\end{aligned}\label{eqn_prf_lm:invariant_twoagents}
	\end{equation}
	Then, by using the definition (\ref{eqn_Filippovdef}), it can be shown that, for $x_1 = x_2$ (i.e., $x\in\calC$), any element $\nu$ in the Filippov set-valued map of (\ref{eqn_prf_lm:invariant_twoagents}) satisfies $\nu_1 = -\nu_2$. Stated differently, the following implication holds with $\nu = [\nu_1^\top,\nu_2^\top]^\top$:
	\begin{align}
	\nu\in\calF[h](x),\; x\in\calC \;\Rightarrow\; \nu_1 = -\nu_2.
	\end{align}
	Next, by recalling that $\alpha(x) = \calI$ (see (\ref{eqn_prf_lm:invariant_alpha})), it follows from (\ref{eqn_prf_lm:invariant_gengrad}) that
	\begin{align}
	\partial V(x) = \mathrm{co}\big\{e_1\otimes \psi(x_1(t)), e_2\otimes \psi(x_2(t))\big\}
	\end{align}
	with $x_1 = x_2$. Now, following a similar reasoning as in item (i) on the basis of the definition of the set-valued Lie derivative in (\ref{e:set-valuedLie}), it can be concluded that $a = \zeta^\top\nu$ is necessarily $0$, such that $\max\calL_{\calF[h]}V(x(t)) = 0 \leq 0$ for all $x\in\calC$.
\end{enumerate}

Second, the case $x\notin\calC$ is considered. For this case, Proposition~\ref{p:calculus for Filippov} is applied to obtain
\begin{align}
\calF[h](x) \subset \bigtimes_{j=1}^N \calF[f_i]\big(-\bar{L}_ix\big) =: \bar{\calF}(x),
\label{eqn_prf_lm:invariant_Finclusion}
\end{align}
after which it follows from the definition of the set-valued Lie derivative (\ref{e:set-valuedLie}) that
\begin{align}
\calL_{\calF}V(x) \subset \calL_{\bar{\calF}}V(x).
\label{eqn_prf_lm:invariant_LVinclusion}
\end{align}
Therefore, in the remainder of the proof for the case $x\notin\calC$, we will show that $\max\calL_{\bar{\calF}[h]}V(x(t)) \leq 0$, which implies the desired result by (\ref{eqn_prf_lm:invariant_LVinclusion}). As before, it is sufficient to consider the set $\Psi$ such that $\calL_{\bar{\calF}[h]}V(x(t))$ is non-empty for all $t\in\Psi$.

Now, take an index $i\in\alpha(x)$ such that $\bar{L}_ix \neq \mathbf{0}$. Note that such $i$ indeed exists. Namely, assume in order to establish a contradiction that $\bar{L}_ix = \mathbf{0}$ for all $i\in\alpha(x)$. If $\alpha(x)=\calI$, then there exists $\ell\in\{1,\ldots,k\}$ such that 
\begin{align}
\beta(\ell):=\arg\max_{j\in\calI}x_{j,\ell} \subsetneq \calI,
\end{align}
i.e., there exists a state component $\ell$ that does not have the same value for all agents. Otherwise, $x\in\calC$, which is a contradiction. Then for any $i\in\beta(\ell)$ with $j\in N_i\setminus \beta(\ell)$, we have $\bar{L}_ix \neq \mathbf{0}$, where $N_i$ is the set of neighbors of agent $i$. If $\alpha(x)\subsetneq \calI$ and $\bar{L}_ix = \mathbf{0}$ for all $i\in\alpha(x)$, then for any $i\in\alpha(x)$ with $j\in N_i\setminus\alpha(x)$ we have
\begin{align}
0 & = \psi(x_i)^\top\bar{L}_ix  \label{eqn_prf_lm:invariant_Lix-4}\\
& = \sum_{j\in N_i} \bigg(\|x_i\|_p^p - \sum_{\ell=1}^{k}|x_{i,\ell}|^{p-1}\calF[\sign](x_{i,\ell})x_{j,\ell}\bigg)\\
& \geq \sum_{j\in N_i} \bigg(\|x_i\|_p^p - \sum_{\ell=1}^{k}|x_{i,\ell}|^{p-1} |x_{j,\ell}|\bigg)\\
& \geq  \sum_{j\in N_i} \big(\|x_i\|_p^p - \|x_j\|_p \|x_i\|_p^{p-1} \big)
\label{eqn_prf_lm:invariant_Lix} \\
& > 0
\end{align}
where the inequality \eqref{eqn_prf_lm:invariant_Lix} is based on H\"{o}lder's inequality, and the last inequality is implied by $\|x_i\|>\|x_j\|$ for any $j\in N_i\setminus\alpha(x)$. This is a contradiction.

For the index $i\in\alpha(x)$ satisfying $\bar{L}_ix\neq \mathbf{0}$, it follows from Assumption~\ref{ass_vectorfield_1} that there exists $\gamma>0$ such that
\begin{align}
\calF[f_i]\big(-\bar{L}_ix\big) = \{-\gamma\bar{L}_ix \},
\end{align}
i.e., for any $\nu\in\bar{\calF}(x)$ it holds that $\nu_i = -\gamma\bar{L}_ix$.
Note that this is a result of the direction-preserving property of either the direction-preserving signum (for a nonzero argument, then $\gamma = \frac{1}{\|\bar{L}_ix\|_p}$) or the Lipschitz continuous function (by Assumption~\ref{ass_vectorfield_1}). Then, choosing $\zeta\in\partial V(x)$ as $\zeta = e_i\otimes \psi(x_i)$ (recall that $i\in\alpha(x)$), it follows from (\ref{e:set-valuedLie}) that
\begin{align}
\calL_{\bar{\calF}[h]}V(x) = \{-\gamma \psi(x_i)^\top\bar{L}_ix\}.
\label{eqn_prf_lm:invariant_LbarV}
\end{align}
Next, by observing \eqref{eqn_prf_lm:invariant_Lix} we have
\begin{align}
\psi(x_i)^\top\bar{L}_ix \geq 0, 
\end{align}
which implies $\calL_{\bar{\calF}[h]}V(x)\subset \R_{\leq 0}$.

Summarizing the results of the two cases leads to the condition
\begin{align}
\max\calL_{\calF}V(x)\leq0
\end{align}
for all $x\in\R^{kn}$, which proves strong invariance of $\calS_p(C)$ for all $C>0$.

\textit{(2)}. Let $p=\infty$. Consider 
\begin{align}
V(x) = \max_{i\in\calI} \|x_i\|_\infty
\label{eqn_prf_lm:invariant_V_inf}
\end{align}
as a Lyapunov function candidate. Since the proof shares the same structure and reasoning as the case $p\in[1,\infty)$, we only provide a sketch of the proof. 

In this case, the set $\alpha(x)$ in \eqref{eqn_prf_lm:invariant_alpha} is 
\begin{align}
\alpha(x) = \big\{ i\in\calI \mid \|x_i\|_\infty = V(x) \big\},
\end{align}
whereas the generalized gradient of $V$ reads
\begin{align}
\partial V(x)= &\mathrm{co} \big\{ e_i\otimes \big(\calF[\sign](x_{i,\ell})e_\ell\big) \mid e_i\in\R^n, \nonumber\\
&  e_\ell\in\R^k, |x_{i,\ell}|=V(x) \big\}.
\end{align}

For the case $x\in\calC$, we have $\max\calL_{\calF[h]}V(x(t)) = 0 \leq 0$ by using the same argument as the case $p\in[1,\infty)$. Hence, we omit the details.

For the case $x\notin\calC$, we first show that there exists an index $i\in\alpha(x)$ such that $\bar{L}_ix \neq \mathbf{0}$ by using contradiction. If $\alpha(x)=\calI$, the conclusion follows as the case \textit{(1)}. If $\alpha(x)\subsetneq \calI$ and $\bar{L}_ix = \mathbf{0}$ for all $i\in\alpha(x)$, then for any $i\in\alpha(x)$ with $j\in N_i\setminus\alpha(x)$ we have
\begin{align}
0 & = \big(\calF[\sign](x_{i,\ell})e_\ell\big)^\top\bar{L}_ix  \label{eqn_prf_lm:invariant_Lix_infty-2}\\
& = \sum_{j\in N_i} \bigg( |x_{i,\ell}| - \calF[\sign](x_{i,\ell})x_{j,\ell} \bigg)\\
& \geq \sum_{j\in N_i} \bigg(\|x_i\|_\infty - \|x_{j}\|_\infty \bigg) \label{eqn_prf_lm:invariant_Lix_infty}\\
& > 0
\end{align}
where $e_i\otimes \big(\calF[\sign](x_{i,\ell})e_\ell\big)\in\partial V(x)$ and the last inequality is implied by $\|x_i\|>\|x_j\|$ for any $j\in N_i\setminus\alpha(x)$. This is a contradiction.

For the index $i\in\alpha(x)$ satisfying $\bar{L}_ix\neq \mathbf{0}$, it follows from Assumption~\ref{ass_vectorfield_1} that there exists $\gamma>0$ such that for any $\nu\in\bar{\calF}(x)$ it holds that $\nu_i = -\gamma\bar{L}_ix$. Using the same reasoning as in case \textit{(1)}, by choosing $\zeta\in\partial V(x)$ as $\zeta = e_i\otimes \big(\calF[\sign](x_{i,\ell})e_\ell\big)$ (recall that $i\in\alpha(x)$), it follows from (\ref{e:set-valuedLie}) that
\begin{align}
\calL_{\bar{\calF}[h]}V(x) = \{-\gamma \big(\calF[\sign](x_{i,\ell})e_\ell\big)^\top\bar{L}_ix\}.
\end{align}
Next, by observing \eqref{eqn_prf_lm:invariant_Lix_infty} we have
$(\calF[\sign](x_{i,\ell})e_\ell)^\top\bar{L}_ix \geq 0,$ 
which implies $\calL_{\bar{\calF}[h]}V(x)\subset \R_{\leq 0}$.

In summary, we have
\begin{align}
\max\calL_{\calF}V(x)\leq0
\end{align}
for all $x\in\R^{kn}$, which proves strong invariance of $\calS_p(C)$ for all $C>0$.
\end{proof}

Now we are in the position to state the main result of this paper.
\begin{thm}\label{th:main_vector}
	Consider the nonlinear consensus protocol (\ref{e:nonlinear_vector}) satisfying Assumption~\ref{ass_vectorfield_1} and the corresponding differential inclusion (\ref{e:nonlinear_vector_fili}) for some $p\in[0,\infty]$. Then, the following statements hold:
	\begin{itemize}
		\item[\textit{(i)}] If $|\calI| > 2$, then all Filippov solutions of (\ref{e:nonlinear_vector_fili}) converge to consensus in finite time if and only if $|\calI_c| = 1$;
		\item[\textit{(ii)}] If $|\calI| = 2$, then all Filippov solutions of (\ref{e:nonlinear_vector_fili}) converge to consensus in finite time if and only if $|\calI_c| \leq 1$.
	\end{itemize}
\end{thm}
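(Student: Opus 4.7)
My plan is to prove the theorem by addressing sufficiency and necessity of the conditions on $|\calI_c|$ separately, relying on the Lyapunov machinery in Lemma~\ref{lm:invariant_vec}, Theorem~\ref{chap_preli:thm_stability}, and Lemma~\ref{p:finite_time_convergence}.

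For \emph{sufficiency} in case~(i) with $|\calI_c| = 1$ and case~(ii) with $|\calI_c| = 1$, relabel so that $\calI_c = \{1\}$ and take the Lyapunov function $V(x) = \max_{i\in\calI}\tfrac{1}{p}\|x_i - x_1\|_p^p$ for $p\in[1,\infty)$, with the analogue $V(x) = \max_i\|x_i - x_1\|_\infty$ for $p = \infty$. This function is convex (hence regular) and vanishes exactly on $\calC$; together with Lemma~\ref{lm:invariant_vec} it confines trajectories to a compact set, so that Theorem~\ref{chap_preli:thm_stability} and Lemma~\ref{p:finite_time_convergence} apply. Following the pattern of~\eqref{eqn_prf_lm:invariant_gengrad}, I would compute $\partial V$ via the max-rule and choose $\zeta = (e_{i^*} - e_1)\otimes\psi(x_{i^*} - x_1)$ for an index $i^* \in \alpha(x)$ with $\bar{L}_{i^*}x \neq \mathbf{0}$, whose existence follows from the connectivity argument of~\eqref{eqn_prf_lm:invariant_Lix}. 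H\"{o}lder's inequality then bounds the signum-agent contribution strictly below zero, and local Lipschitz continuity of $f_1$ controls the continuous-agent contribution, yielding $\max\calL_{\calF[h]}V(x) \leq -\varepsilon < 0$ a.e.\ on compact sublevel sets of $V$ away from $\calC$. Lemma~\ref{p:finite_time_convergence} then delivers reaching of $\calC$ in finite time, and \emph{static} convergence follows because (a) $V$ is non-increasing with $V^{-1}(0) = \calC$, so no solution can leave $\calC$ after entering; and (b) within $\calC$ any Filippov solution must satisfy $\dot{x}_1 = f_1(\mathbf{0}) = \mathbf{0}$, forcing $\bar{x}$ to be constant. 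For case~(ii) with $|\calI_c| = 0$, a direct analysis of $\dot{z} = -2 z/\|z\|_p$ with $z = x_1 - x_2$ yields $\|z(t)\|_p = \|z(0)\|_p - 2t$, and the sliding-mode argument underlying the case-(ii) analysis in Lemma~\ref{lm:invariant_vec} gives static consensus after $t^* = \|z(0)\|_p/2$.

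For \emph{necessity}, three sub-cases arise. If $|\calI_c| = |\calI| = 2$, Proposition~\ref{prop: asymptotic_convergence} directly rules out finite-time convergence, handling part~(ii). If $|\calI_c| = 0$ with $|\calI| > 2$, Example~\ref{example_undi_3nodes} (with the extension from~\cite{Wei2015} to general $n$) shows that the Filippov inclusion at $\calC$ admits non-constant solutions of the form $\one\otimes\eta(t)$, violating staticness. If $|\calI_c| \geq 2$ with $|\calI| > 2$, I would exhibit a symmetric initial condition on which the reduced dynamics are Lipschitz continuous and approach consensus only asymptotically; for three agents on the complete graph with $f_1(y) = f_2(y) = y$ and $f_3 = \sign$, taking $x_1(0) = c$, $x_2(0) = -c$, $x_3(0) = 0$ with $c \neq 0$ gives $x_3 \equiv 0$ by symmetry and $\dot{x}_1 = -3 x_1$, converging only as $t \to \infty$. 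Symmetric constructions extend this to larger $n$ and $|\calI_c|$.

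The \emph{main obstacle} is verifying the uniform strict negativity $\max\calL_{\calF[h]}V \leq -\varepsilon < 0$ on $S\setminus\calC$ in the sufficiency step. The subtlety is that for $\nu \in \calF[h](x)$ the component $\nu_1$ from the continuous agent is only Lipschitz (not unit-norm), so the continuous-agent contribution $-\psi(x_{i^*}-x_1)^\top f_1(-\bar{L}_1 x)$ could a priori cancel the gain from the unit-norm $\sign$ terms. The resolution should be that on compact sublevel sets of $V$ the local Lipschitz bound $\|f_1(-\bar{L}_1 x)\|\leq K\|\bar{L}_1 x\|$ yields a continuous-agent contribution vanishing continuously as $x\to\calC$, uniformly dominated by the bounded-below signum contribution from at least one of the $n-1 \geq 1$ signum agents (using $|\calI| > 2$), delivering the required strict negativity.
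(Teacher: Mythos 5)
Your sufficiency argument has a genuine gap at its central step. Lemma~\ref{p:finite_time_convergence} requires $\max\calL_{\calF[h]}V(x)\leq-\varepsilon<0$ uniformly on $S\setminus Z_{\calF[h],V}$, i.e.\ all the way up to the consensus set, but with your choice $V(x)=\max_{i}\tfrac1p\|x_i-x_1\|_p^p$ this is false for $p>1$: every subgradient has the form $(e_{i}-e_1)\otimes\psi(x_{i}-x_1)$ with $\|\psi(x_i-x_1)\|_q=\|x_i-x_1\|_p^{p-1}$, and every $\nu\in\calF[h](x)$ is bounded on a compact invariant set, so any $a\in\calL_{\calF[h]}V(x)$ obeys $|a|\leq C\,V(x)^{(p-1)/p}\rightarrow 0$ as $x\rightarrow\calC$. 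Hence your Lyapunov function can at best certify finite-time entry into a neighborhood of $\calC$, unless you replace Lemma~\ref{p:finite_time_convergence} by a comparison argument for $\dot V\leq -cV^{(p-1)/p}$, which you do not do. Your proposed resolution of the ``main obstacle'' also attacks the wrong term: with your subgradient the signum contribution $-\psi(x_{i^*}-x_1)^\top\nu_{i^*}$ is only guaranteed nonpositive (it can vanish when all neighbours of $i^*$ are themselves maximizers), so there is nothing uniformly bounded below to dominate $-\psi(x_{i^*}-x_1)^\top f_1(-\bar L_1x)$. The paper avoids both issues by taking the degree-one homogeneous $V(x)=\sqrt{x^\top\bar Lx}$: the continuous agent contributes $\leq 0$ purely by direction preservation (no domination argument is needed), each signum agent contributes $\leq -d_1\|\bar L_ix\|_2$, the row-sum identity gives $\sum_{i\geq2}\|\bar L_ix\|_2\geq\tfrac12\sum_i\|\bar L_ix\|_2$, and $L^\top L\succcurlyeq\lambda_2 L$ turns this into the $x$-independent bound $a\leq-\tfrac{d_1\sqrt{\lambda_2}}{2}$, which is exactly the uniformity your $V$ cannot deliver.

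The necessity part is also incomplete for $|\calI_c|\geq2$. The theorem quantifies over every connected graph and every placement of the continuous agents, but your explicit counterexample is the $3$-node complete graph with the two continuous agents placed symmetrically, and ``symmetric constructions extend this'' is not an argument: a generic graph with a generic $\calI_c$ admits no such symmetry. The paper's construction is graph-independent: choose $x_0$ with $\bar L_{dc}x_0^c+\bar L_{dd}x_0^d=\mathbf 0$ but $x_0^c\notin\calC$ (possible because $\bar L_{dd}$ is invertible and $|\calI_c|\geq2$), prove with $V_i(x)=\|\bar L_ix\|_p$, $i\in\calI\setminus\calI_c$, that $\max\calL_{\calF[h]}V_i\leq0$ on a sublevel set $S(\delta_\varepsilon)$, so the sliding manifold on which all signum agents see zero input is invariant there, and then observe that the reduced $x^c$-dynamics is a consensus system on the Schur complement $L_{cc}-L_{cd}L_{dd}^{-1}L_{dc}$, again a Laplacian by Lemma~\ref{lem_schur complement}, with all-continuous vector fields, so Proposition~\ref{prop: asymptotic_convergence} rules out finite-time consensus. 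That reduction is the missing idea in your proposal. (Your treatment of $|\calI_c|=|\calI|=2$ via Proposition~\ref{prop: asymptotic_convergence}, of $|\calI_c|=0$ via Example~\ref{example_undi_3nodes}, and of the two all-signum agents in case~(ii) is sound.)
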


\begin{proof}
The proofs of sufficiency and necessity of the two statements are considered separately.

\emph{Sufficiency.} The Lyapunov function candidate
\begin{align}
V(x) = \sqrt{x^\top\bar{L}x}
\label{eqn_prf_tm:main_vector_suff_V}
\end{align}
is introduced. Note that $V(x) = 0$ for all $x\in\calC$ and that $V$ is convex, hence, regular. The set-valued Lie derivative of $V$ in (\ref{eqn_prf_tm:main_vector_suff_V}) will be considered for $x\notin\calC$, hereby evaluating the two cases in the statement of Theorem~\ref{th:main_vector} separately. In both cases, it will be shown that
\begin{align}
\max\calL_{\calF[h]}V(x)\leq -c
\end{align}
for some $c>0$ that is independent of $x\notin\calC$. Then, finite-time consensus follows from Lemma~\ref{p:finite_time_convergence}, hereby exploiting a strongly invariant set from Lemma~\ref{lm:invariant_vec}.

\textit{(i)}. The case $|\calI>2|$ is considered first, and we assume that $|\calI_c| = 1$.
As in the proof of Lemma~\ref{lm:invariant_vec}, we use the extended differential inclusion $\bar{\calF}$ defined in (\ref{eqn_prf_lm:invariant_Finclusion}), which satisfies the property (\ref{eqn_prf_lm:invariant_LVinclusion}). Consequently, $\calL_{\calF[h]}V(x) \subset \calL_{\bar{\calF}}V(x)$ with the latter given by (\ref{e:set-valuedLie}) as
\begin{align}
\!\calL_{\bar{\calF}}V(x) = \left\{ \frac{1}{\sqrt{x^\top\bar{L}x}}x^\top\bar{L}\nu \bigg|\; \nu \in \bar{\calF}(x) \right\}\!,\!
\label{eqn_prf_tm:main_vector_suff_LV}
\end{align}
which follows from the observation that the generalized gradient of (\ref{eqn_prf_tm:main_vector_suff_V}) reduces to the regular gradient for \mbox{$x\notin\calC$}.

As the case $|\calI| > 2$, $|\calI_c| = 1$ is considered, there exists exactly one agent with locally Lipschitz continuous dynamics. Without loss of generality, let this agent have index $1$. Then, it follows from the property of $f_1$ in Assumption~\ref{ass_vectorfield_1} that
\begin{align}
x^\top\bar{L}^\top_1f_1\big(-\bar{L}_1x\big) \leq 0.
\label{eqn_prf_tm:main_vector_suff_LVcont}
\end{align}
Next, for $i\in\{2,\ldots,n\}$, it holds that
\begin{align}
\calF[\sign]\big(-\bar{L}_ix\big) = \left\{\begin{array}{ll}
\left\{ \frac{-\bar{L}_ix}{\|\bar{L}_ix\|_p} \right\}, & \|\bar{L}_ix\| \neq 0, \\
\{v\mid \|v\|_p\leq 1\}, & \|\bar{L}_ix\| = 0,
\end{array}\right.
\end{align}
such that, for all $\nu_i\in\calF[\sign](-\bar{L}_ix)$,
\begin{align}
x^\top\bar{L}^\top_i\nu_i &= \left\{\begin{array}{ll}
\frac{-\|\bar{L}_ix\|_2^2}{\|\bar{L}_ix\|_p}, & \|\bar{L}_ix\| \neq 0, \\
0, & \|\bar{L}_ix\| = 0.
\end{array}\right. \label{eqn_prf_tm:main_vector_suff_LVsign_step1}
\end{align}
Due to the equivalence of norms on finite-dimensional vector spaces, there exists $d_1>0$ such that $d_1\|\bar{L}_ix\|_p \leq \|\bar{L_i}x\|_2$ for all $x\in\R^{nk}$. Applying this to (\ref{eqn_prf_tm:main_vector_suff_LVsign_step1}) yields
\begin{align}
x^\top\bar{L}^\top_i\nu_i \leq -d_1\|\bar{L}_ix\|_2
\label{eqn_prf_tm:main_vector_suff_LVsign_step2}
\end{align}
for $i\in\{2,\ldots,n\}$ and note that this indeed holds for both cases in (\ref{eqn_prf_tm:main_vector_suff_LVsign_step1}).
Now, after recalling the definition  of $\calL_{\bar{\calF}}V(x)$ in (\ref{eqn_prf_tm:main_vector_suff_LV}), the combination of (\ref{eqn_prf_tm:main_vector_suff_LVcont}) and (\ref{eqn_prf_tm:main_vector_suff_LVsign_step2}) shows that, for any $a\in\calL_{\bar{\calF}}V(x)$,
\begin{align}
a \leq -\frac{d_1}{\sqrt{x^\top\bar{L}x}} \left( \sum_{i=2}^n \|\bar{L}_ix\|_2 \right).
\label{eqn_prf_tm:main_vector_suff_abound_step1}
\end{align}
%
Note that, as $\bar{L} = L\otimes I_k$  with $L$ a graph Laplacian satisfying $\one ^\top L = \mathbf{0}^\top$, it holds that
\begin{align}
\|\bar{L}_1x\| = \left\| \sum_{i=2}^n \bar{L}_ix \right\| \leq \sum_{i=2}^n \big\| \bar{L}_ix \big\|,
\label{eqn_prf_tm:main_vector_suff_L1bound}
\end{align}
where the triangle inequality is used to obtain the inequality. Then, the use of (\ref{eqn_prf_tm:main_vector_suff_L1bound}) in (\ref{eqn_prf_tm:main_vector_suff_abound_step1}) yields
\begin{align}
a \leq -\frac{1}{2}\frac{d_1}{\sqrt{x^\top\bar{L}x}} \left( \sum_{i=1}^n \|L_ix\| \right).
\label{eqn_prf_tm:main_vector_suff_abound_step2}
\end{align}
By further exploiting that $L$ is a graph Laplacian, it holds that $L$ and $L^\top L$ can be written as
\begin{align}
L = U^\top\Lambda U, \quad L^\top L = U^\top\Lambda^2 U,
\label{eqn_prf_tm:main_vector_suff_Ldecomposition}
\end{align}
where $\Lambda = \diag\{0,\lambda_2,\ldots,\lambda_n\}$ is a diagonal matrix with Laplacian real-valued eigenvalues satisfying $0<\lambda_2$ and $\lambda_{j}\leq\lambda_{j+1}$, and the matrix $U$ collects the corresponding eigenvectors. From (\ref{eqn_prf_tm:main_vector_suff_Ldecomposition}), it can be seen that
\begin{align}
L^\top L - c_1L \succcurlyeq 0
\end{align}
for any $c_1\in(0,\lambda_2]$. Consequently, using $\bar{L} = L\otimes I_k$, it follows that
\begin{align}
\left( \sum_{i=1}^n \|L_ix\| \right)^2 = x^\top L^\top Lx \geq c_1 x^\top Lx.
\label{eqn_prf_tm:main_vector_suff_Lbound}
\end{align}
After taking the square root (note that $x^\top L x>0$ for all $x\notin\calC$) in (\ref{eqn_prf_tm:main_vector_suff_Lbound}) and using (\ref{eqn_prf_tm:main_vector_suff_abound_step2}), the result
\begin{align}
a \leq -\frac{\sqrt{c_1}}{2}\frac{d_1\sqrt{x^\top\bar{L}x}}{\sqrt{x^\top\bar{L}x}} = -\frac{d_1\sqrt{c_1}}{2}<0
\end{align}
follows.  

\textit{(ii)}. The proof for the case $|\calI| = 2$ and $|\calI_c|\leq1$ follows similarly.

Next by Lemma~\ref{p:finite_time_convergence}, we have that the trajectories converge to $Z_{\bar{\calF},V}$ in finite time. The remaining task to characterize the set $Z_{\bar{\calF},V}$. So far we have shown that $x\notin Z_{\bar{\calF},V}$ for $\forall x\notin \calC$ which implies that $Z_{\bar{\calF},V}\subset\calC$. By the fact that $\calC$ is closed, we have $\overline{Z_{\bar{\calF},V}}\subset \calC$. Moreover when $x\in\calC$, $\dot{x}_i=0$ where $\{i\}=\calI_c$ which implies $x_i$ remains constant. In conclusion, the finite-time convergence to static consensus is guaranteed.

	\emph{Necessity:} The necessity of the conditions in  \textit{(i)} and \textit{(ii)} can be proven by showing the following equivalent formulation: For any $|\calI|\geq2$ and $|\calI_c|\geq2$, there exists (at least one) Filippov solution of (\ref{e:nonlinear_vector_fili}) that does not converge to (static) consensus in finite time. Note that the conditions in both statements are now considered simultaneously.
	
	For the case $\calI=\calI_c$, the desired result immediately follows from Proposition~\ref{prop: asymptotic_convergence}. Therefore, in the remainder of this proof, we consider the case in which $\calI_c$ is a strict subset of $\calI$ and we  restrict analysis to the case $|\calI| \geq 3$.
	
	Consider the function 
	\begin{equation}\label{eqn_Vi}
	V_i(x)=\|\bar{L}_i x\|_p
	\end{equation} for $i\in\calI\setminus\calI_c$ and the set
	\begin{equation}\label{eqn_setSdelta}
	S(\delta) = \big\{ x\in\R^{kn} \;\big|\; \sqrt{x^\T\bar{L}x} \leq \delta \big\}.
	\end{equation}
	Note that $\calC\subset S(\delta)$ for any $\delta\leq 0$. By the sufficiency proof of Theorem~\ref{th:main_vector}, we have that $S(\delta)$ is strongly invariant. More precisely, by the Lipschitz continuity of $f_i$ for $i\in\calI_c$, there exists $\varepsilon$ such that  $\|f_i(-\bar{L}_ix)\|_2\leq 1$ for any $\|\bar{L}_ix\|_2\leq \varepsilon$ and $i\in\calI_c$. Furthermore, 
	\begin{align}
	\bigg\|\sum_{j\in N_i} (x_j-x_i)\bigg\|_2 & \leq \sum_{j\in N_i} \|x_j-x_i\|_2 \\
	&\leq \bigg(m (\sum_{(i,j)\in \calE} \|x_j-x_i\|_2^2) \bigg)^{\frac{1}{2}},
	\end{align}
	where we recall that $m=|\calE|$. Then, by choosing $\delta_\varepsilon<\frac{\varepsilon}{\sqrt{m}}$, we have $\|\bar{L}_ix\|_2\leq \varepsilon$ for any $i\in\calI_c$ and $x\in S(\delta_\varepsilon)$.	By the equivalence of the $\ell_p$-norms on finite dimensional space, we assume without loss of generality that we can choose proper $\delta_\varepsilon$ such that $\|\bar{L}_ix\|_p\leq \varepsilon$ for any $i\in\calI_c$ and $x\in S(\delta_\varepsilon)$.
	
	We consider the evolution of $V_i$ along trajectories $x$ in $S(\delta_\varepsilon )$. To this end, note that $V_i(x)$ in (\ref{eqn_Vi}) is locally Lipschitz and convex and, as a result, regular. In the evaluation of the set-valued Lie derivative $\calL_{\calF[h]}V_i(x)$ as in (\ref{e:set-valuedLie}), we will only consider the subset of time for which $\calL_{\calF[h]}V_i(x(t))$ is non-empty. Moreover, as before, the cases $p\in[1,\infty)$ and $p=\infty$ will be considered separately. We denote $\zeta=[\zeta_1,\ldots,\zeta_n]^\top$ and $\nu=[\nu_1,\ldots,\nu_n]^\top$, where $\zeta_j, \nu_j\in\R^k$, $j=1,\ldots,n$, for any $\zeta\in \partial V_i(x)$ and $\nu\in\calF[h](x)$, respectively.
	
	\textit{(1)}. Let $p\in[1,\infty)$.
	If $V_i(x)\neq 0$, then we have that 
	\begin{align}
	\zeta_{j} & = \mathbf{0},\quad  & j\notin N_i \\
	\zeta_{i} & \in L_{ii} \|\bar{L}_i x \|_p^{1-p}\psi(\bar{L}_i x)  \\
	\zeta_{j} & \in \|\bar{L}_i x \|_p^{1-p}\psi(\bar{L}_i x), \quad  & j\in N_i.
	\end{align}
	where $\psi$ is given as in \eqref{e:psi}. If $V_i(x)= 0$, we have that $\zeta$ satisfies $\zeta_{j}=\mathbf{0}$ for $j\notin N_i$ and the vector $\omega$, whose components are $\frac{1}{L_{ii}}\zeta_{i}$ and $\zeta_j$ for all $j\in N_i$, satisfies $\|\omega\|_q\leq 1$ with $q=\frac{p}{p-1}$ for $p>1$ and $q=\infty$ for $p=1$.
	
	If $V_i(x)=0$, the Lie derivative $\calL_{\calF[h]}V_i(x)=\{0\}$. Indeed, if there exists $a\in\calL_{\calF[h]}V_i(x)$, there exists $\nu\in\calF[h](x)$ such that $a=\nu_i^\top\zeta_i+\sum_{j\in N_i}\nu_j^\top\zeta_j$ for any $\omega$, where the components of $\omega$ are $\frac{1}{L_{ii}}\zeta_{i}$ and $\zeta_j$ for all $j\in N_i$, satisfying $\|\omega\|_{q}\leq 1$. Hence such $a$ can only be equal to $0$ and $\max\calL_{\calF[h]}V_i(x)\leq 0$.
	
	If $V_i(x)\neq 0$, for any $a\in\calL_{\calF[h]}V_i(x)$, there exists a $\nu\in\calF[h](x)$ such that
	\begin{equation}
	\begin{aligned}
	a & = \sum_{j=1}^n \|\bar{L}_i x \|_p^{1-p} L_{ij}\nu_j^\top  \psi(\bar{L}_i x) \\
	& \leq \sum_{j=1, j\neq i}^n |L_{ij}|-L_{ii} \\
	& = 0,
	\end{aligned}
	\end{equation}
	where the inequality is implied by H\"{o}lder's inequality and the fact that $\|\nu_j\|_p\leq 1, j\neq i$, $\nu_i=-\frac{\bar{L}_i x}{\|\bar{L}_i x \|_p}$ and $\|\psi(\bar{L}_i x)\|_q=\|\bar{L}_i x \|_p^{p-1}$.
	
	\textit{(2)}. Let $p=\infty$. The generalized gradient of $V_i$ is given as 
	\begin{align}
	\partial V_i(x)= \mathrm{co}\{\zeta \mid \zeta_i &= e_\ell \calF[\sign]((\bar{L}_ix)_\ell)L_{ii}, \nonumber\\ 
	\zeta_j & = - e_\ell \calF[\sign]((\bar{L}_ix)_\ell), \nonumber\\
	e_\ell & \in\R^k, j\in N_i, |(\bar{L}_ix)_\ell|=V_i(x)  \}.
	\end{align}
	Then for any $a\in\calL_{\calF[h]}V_i(x)$, there exists a $\nu\in\calF[h](x)$ such that
		\begin{align}
		a & = \nu_i^\top\zeta_i+\sum_{j\in N_i}\nu_j^\top\zeta_j \nonumber\\ 
		& = \calF[\sign]((\bar{L}_ix)_\ell)L_{ii}e^\top_\ell \nu_i-  \sum_{j\in N_i}\calF[\sign]((\bar{L}_ix)_\ell)e^\top_\ell \nu_j \nonumber\\
		& \leq \sum_{j=1, j\neq i}^n |L_{ij}|-L_{ii} \nonumber\\
		& = 0. \label{e:a-pinfty}
		\end{align}
	
	Combining the above two cases, we have shown that
	\begin{align}
	\max \calL_{\calF[h]}V_i(x(t)) \leq 0
	\label{eqn_Liederivative_Vi}
	\end{align}
	for any $i\in\calI\setminus\calI_c$ and for $x(t)\in S(\delta_\varepsilon)$.
	
	In the remainder of this proof, we will construct a Filippov solution of (\ref{e:nonlinear_vector_fili}) in $S(\delta_\varepsilon)$ which does not achieve static consensus in finite time. Here, we assume without loss of generality that the nodes are ordered such that $\calI_c = \{1,\ldots,|\calI_c|\}$ and $\calI\setminus\calI_c = \{|\calI_c|+1,\ldots,n\}$ and partition the Laplacian $L$ accordingly as
	\begin{align}
	L=\begin{bmatrix}
	L_{cc} & L_{cd} \\
	L_{dc} & L_{dd}
	\end{bmatrix}.
	\end{align}
	Then, consider the solutions for an initial condition $x_0 = [ \, x_0^c \;\; x_0^d \, ]^{\T}\in S(\delta)$ that satisfies the set of equations
	\begin{align}
	\bar{L}_{cc}x_0^c + \bar{L}_{cd}x_0^d &\neq \mathbf{0}, \label{eqn_Lpart_cont} \\
	\bar{L}_{dc}x_0^c + \bar{L}_{dd}x_0^d &= \mathbf{0}, \label{eqn_Lpart_discont}
	\end{align}
	and note that such solution exists as $\bar{L}_{dd}$ is invertible. (This follows from the standing assumption that the graph $\calG$ is connected and Lemma~\ref{lem_schur complement}.) Recall that the left-hand side of (\ref{eqn_Lpart_discont}) can be written as $\bar{L}_ix_0 = \mathbf{0}$ for $i\in\calI\setminus\calI_c$ and, thus, as $V_i(x_0) = 0$ with $V_i$ as in (\ref{eqn_Vi}). Furthermore, since $|\calI_c|\geq 2$, the solution $x_0$ can be chosen such that $x^c_0\notin\calC$. Then, by the result (\ref{eqn_Liederivative_Vi}), it follows that $V_i$ is non-increasing along trajectories, such that $V_i(x(t)) = 0$ for all $i\in\calI\setminus\calI_c$ and all $t\geq 0$. Consequently, any trajectory with initial condition $x_0\in S(\delta)$ satisfying (\ref{eqn_Lpart_cont})--(\ref{eqn_Lpart_discont}) satisfies $x^d(t) = -\bar{L}_{dd}^{-1}\bar{L}_{dc}x^c(t)$ for all $t\geq 0$.
	
	In this case, the dynamics of the nodes with continuous dynamics can be expressed as
	\begin{align}
	\dot{x}^c = f^c\big( -(\bar{L}_{cc} - \bar{L}_{cd}\bar{L}_{dd}^{-1}\bar{L}_{dc})x^c \big),
	\label{eqn_xcdynamics}
	\end{align}
	where $f^c$ collects the Lipschitz continuous functions $f_i$ in Assumption~\ref{ass_vectorfield_1} for $i\in\calI_c$. As a result of Lemma~\ref{lem_schur complement}, the matrix $L_{cc} - L_{cd}L_{dd}^{-1}L_{dc}$ is itself a graph Laplacian, such that the dynamics (\ref{eqn_xcdynamics}) can be regarded as a special case of (\ref{e:nonlinear_vector}) in which all nodes have continuous dynamics. As such, the result follows from Proposition~\ref{prop: asymptotic_convergence}, finalizing the proof of necessity.	
\end{proof}

\begin{rem}
In Lemma \ref{lm:invariant_vec} and Theorem \ref{th:main_vector}, we set the edge weights of the graph $\calG$, i.e., $w_{ij}$, to one to simplify the notation in the proofs. However, all results in this paper hold for general positive $w_{ij}$. For example, it can be verified that the calculations in \eqref{eqn_prf_lm:invariant_Lix-4}-\eqref{eqn_prf_lm:invariant_Lix}, \eqref{eqn_prf_lm:invariant_Lix_infty-2}-\eqref{eqn_prf_lm:invariant_Lix_infty} and \eqref{e:a-pinfty} hold for the case with general edge weights.
\end{rem}

We close this subsection with demonstrating the result in Theorem \ref{th:main_vector} by an example.

\begin{exmp}\label{ex:direction-preserving signum}
Consider the system \eqref{e:nonlinear_vector_fili} with $x_i\in\R^2$ defined on the graph given in Fig.~\ref{fig:ex_topology} and satisfying Assumption \ref{ass_vectorfield_1}. Let $p=2$ (i.e., the $\ell_2$-norm is considered) in $\sign$ defined by \eqref{e:sign}.

First, consider $f_i=\sign, i=1,\ldots,4$ and $f_5$ is the identity function. Hence condition $(i)$ in Theorem~\ref{th:main_vector} is satisfied. The trajectory of this system with randomly generated initial conditions is depicted in Fig.~\ref{fig:ex_FTC}. Here we can see that finite-time consensus is achieved.

Next, using the same initial conditions, set $f_i=\sign, i=1,\ldots,3$ and $f_4,f_5$ the identity function. Then both conditions $(i)$ and $(ii)$ are violated, so finite-time consensus is not expected. Indeed, in this case we can only have asymptotic convergence to consensus as shown in Fig.~\ref{fig:ex_AC}.

Notice that in both cases, the trajectories do not exceed the unit circle which is compatible with Lemma~\ref{lm:invariant_vec}.
\end{exmp}

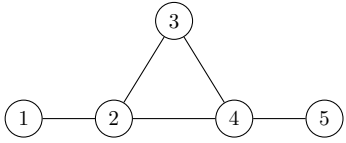
\begin{figure}
\centering
\begin{tikzpicture}
    \node[shape=circle,draw=black,scale=0.8] (A) at (-0.5,0) {1};
    \node[shape=circle,draw=black,scale=0.8] (B) at (0.7,0) {2};
    \node[shape=circle,draw=black,scale=0.8] (C) at (1.5,1.3) {3};
    \node[shape=circle,draw=black,scale=0.8] (D) at (2.3,0) {4};
    \node[shape=circle,draw=black,scale=0.8] (E) at (3.5,0) {5};

    \path [-](A) edge (B);
    \path [-](B) edge (C);
    \path [-](B) edge (D);
    \path [-](D) edge (C);
    \path [-](D) edge (E);  
\end{tikzpicture}
\caption{The underlying topology for the system in Example~\ref{ex:direction-preserving signum}}\label{fig:ex_topology}
\end{figure}

\begin{figure}[t!]
\centering
\begin{subfigure}[t]{0.4\textwidth}
\includegraphics[width=1\textwidth]{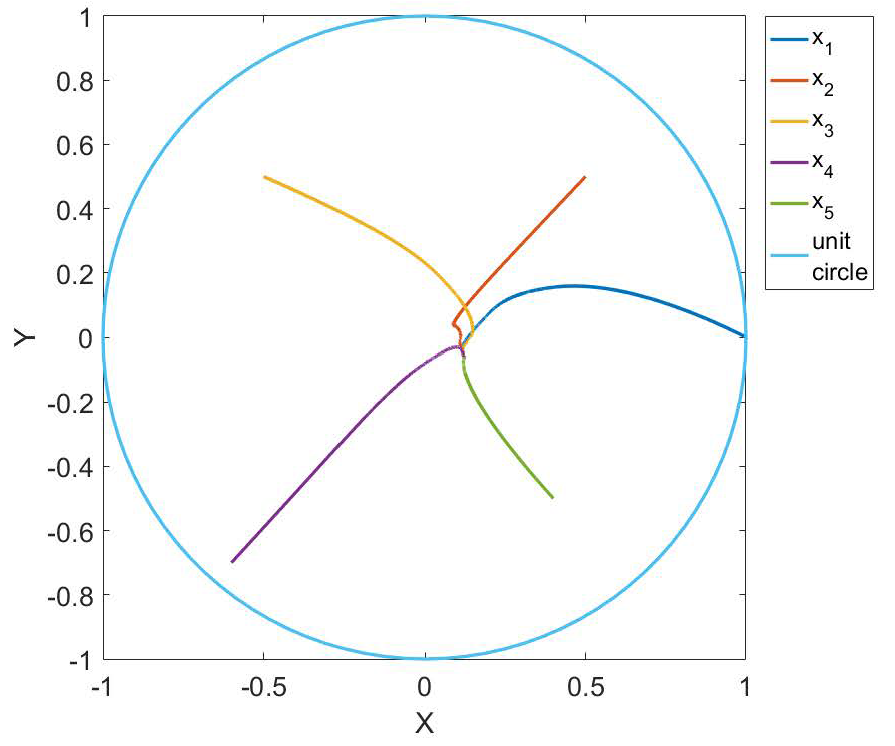}
\caption{Phase portraits of $x_i,i=1\ldots,5$ in the unit disc in $\R^2$.}\label{fig:ex_FTC_div1}
\end{subfigure}
~
\begin{subfigure}[t]{0.4\textwidth}
\includegraphics[width=1\textwidth]{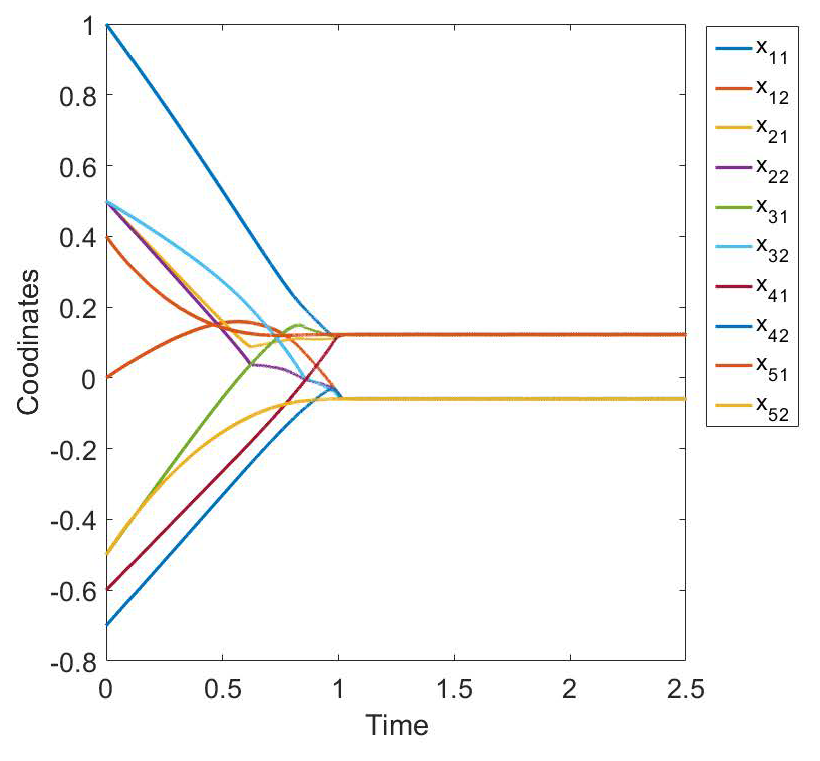}
\caption{Evolution of the two coordinates of $x_i,i=1\ldots,5$. Finite-time consensus is achieved.}\label{fig:ex_FTC_div2}
\end{subfigure}
\caption{Simulation of the first scenario in Example \ref{ex:direction-preserving signum}.}\label{fig:ex_FTC}
\end{figure}

\begin{figure}[t!]
\centering
\begin{subfigure}[t]{0.4\textwidth}
\includegraphics[width=1\textwidth]{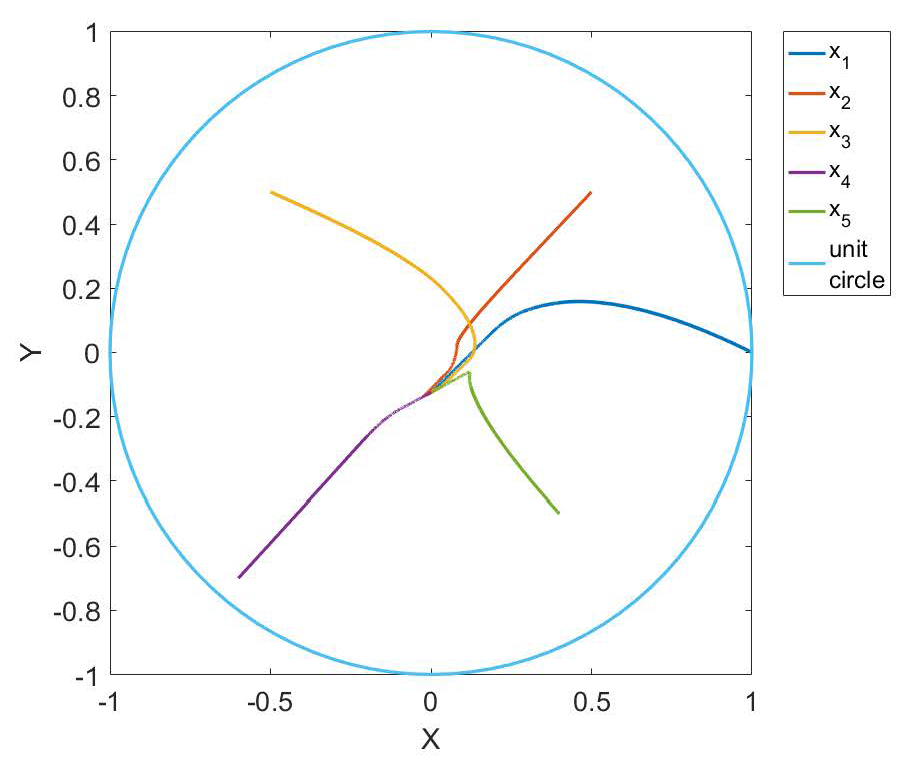}
\caption{Phase portraits of $x_i,i=1\ldots,5$ in the unit disc in $\R^2$.}\label{fig:ex_AC_div1}
\end{subfigure}
~
\begin{subfigure}[t]{0.4\textwidth}
\includegraphics[width=1\textwidth]{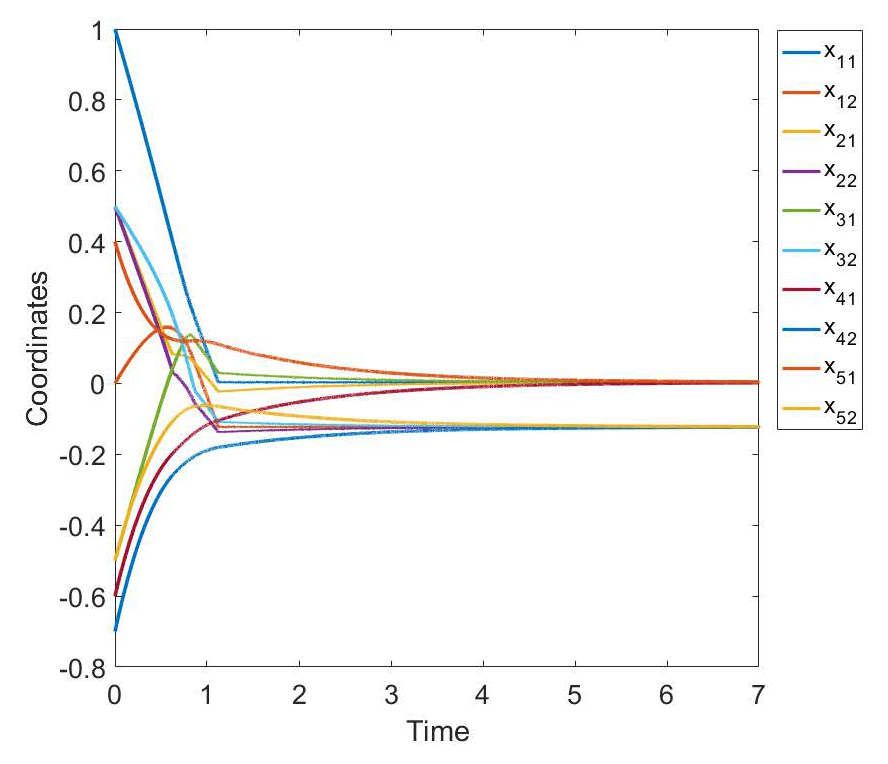}
\caption{Time evolution of the two coordinates of $x_i,i=1\ldots,5$. Here the convergence to consensus is asymptotic but not in finite time.}\label{fig:ex_AC_div2}
\end{subfigure}
\caption{Simulation of the second scenario in Example \ref{ex:direction-preserving signum}.}\label{fig:ex_AC}
\end{figure}

\subsection{Component-wise signum}\label{s:component sign}

In this subsection, we study another finite-time consensus controllers in the form \eqref{e:control} using the component-wise signum function \eqref{e:signc}. Similar to the reasoning in subsection \ref{s:direction-preserving sign},  in order to achieve the convergence to a static vector in $\calC$, we introduce the following assumption on the nonlinear functions $f_i$ in \eqref{e:control}. 

\begin{assum}\label{ass_vectorfield_2}
For some set $\calI_c\subset\calI$, the function $f$ in~(\ref{e:nonlinear_vector}) satisfies the following conditions:
	\begin{enumerate}
		\item[\textit{(i)}] For $i\in\calI_c$, the function $f_i:\R^k\rightarrow\R^k$ satisfies $f_i(y)=[f_{i,1}(y_1),\ldots,f_{i,k}(y_k)]^\top$, where $f_{i,j}$ is locally Lipschitz continuous satisfying $f_{i,j}(0) = 0$ and $f_{i,j}(y_j) y_j > 0$ for all $y_j\neq 0$ and $j=1,\ldots,k$;
		\item[\textit{(ii)}] For $i\in\calI\backslash\calI_c$, the function $f_i = \sign_c$.
	\end{enumerate}
\end{assum}

Based on Assumption \ref{ass_vectorfield_2} and the component-wise signum function, the dynamics of the components are decoupled. Consequently,  the following results can directly be derived from Section \ref{s:direction-preserving sign}.

\begin{cor}
Consider the differential inclusion (\ref{e:nonlinear_vector_fili}) satisfying Assumption \ref{ass_vectorfield_2}. If one of the following two conditions is satisfied, 
\begin{enumerate}
\item[\textit{(i)}] $|\calI| = 2$ and $|\calI_c| = 0$;
\item[\textit{(ii)}] $|\calI| \geq 2$ and $|\calI_c| \geq 1$,
\end{enumerate}
then the set $\calS_\infty(C) = \{ x\in\R^{nk}\mid \|x_i\|_\infty\leq C, i\in\calI \}$, where $C>0$ is a constant, is strongly invariant.
\end{cor}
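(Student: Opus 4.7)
The plan is to decouple the component-wise dynamics and reduce to the scalar case ($k=1$) of Lemma~\ref{lm:invariant_vec}. Under Assumption~\ref{ass_vectorfield_2}, both the Lipschitz functions $f_i$ for $i\in\calI_c$ and the component-wise sign $\sign_c$ for $i\in\calI\setminus\calI_c$ act coordinate by coordinate. Since the $\ell$-th entry of $-\bar{L}_i x$ involves only the $\ell$-th entries of $x_1,\ldots,x_n$, the closed loop splits into $k$ independent scalar multi-agent systems on the same graph $\calG$, whose states are the vectors $x^{(\ell)}:=(x_{1,\ell},\ldots,x_{n,\ell})^\top\in\R^n$ for $\ell=1,\ldots,k$. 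By Proposition~\ref{p:calculus for Filippov}(1), every Filippov solution $x(\cdot)$ of (\ref{e:nonlinear_vector_fili}) projects to a Filippov solution $x^{(\ell)}(\cdot)$ of the $\ell$-th scalar subsystem.

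For $k=1$ the direction-preserving sign and the component-wise sign coincide, all $\ell_p$-norms agree on $\R$, and Assumption~\ref{ass_vectorfield_2} reduces to Assumption~\ref{ass_vectorfield_1}. Hence Lemma~\ref{lm:invariant_vec} applies (with $p$ arbitrary) to each scalar subsystem under condition (i) or (ii) and shows that the scalar box $\{y\in\R^n\mid |y_i|\leq C,\ i\in\calI\}$ is strongly invariant.

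Since $\|x_i\|_\infty = \max_{\ell}|x_{i,\ell}|$, the set $\calS_\infty(C)$ equals the intersection over $\ell$ of the boxes $\{x\in\R^{nk}\mid |x_{i,\ell}|\leq C,\ i\in\calI\}$. If $x(0)\in\calS_\infty(C)$, then $|x_{i,\ell}(0)|\leq C$ for every $i$ and $\ell$; strong invariance of each scalar box gives $|x_{i,\ell}(t)|\leq C$ for all $t\geq 0$, and combining over $\ell$ yields $x(t)\in\calS_\infty(C)$. The only subtle point is the decoupling step: one must verify that a Filippov solution of the full system is, after projection, a Filippov solution of each scalar subsystem. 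This holds because, in the coordinate ordering $(x^{(1)},\ldots,x^{(k)})$, the overall vector field has the block structure $f=\bigtimes_{\ell=1}^{k} f^{(\ell)}$, so Proposition~\ref{p:calculus for Filippov}(1) applied to this factorization provides exactly the projection property needed and closes the argument.
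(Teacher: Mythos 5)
Your argument is correct and is essentially the paper's own: the paper gives no detailed proof, merely asserting that the component-wise structure of Assumption~\ref{ass_vectorfield_2} decouples the dynamics into $k$ scalar subsystems so that the results of Section~\ref{s:direction-preserving sign} (here the scalar case of Lemma~\ref{lm:invariant_vec}) apply, and your projection-plus-intersection-of-boxes argument is exactly that route, with more of the details spelled out. One small refinement: to identify $\calF$ of the $\ell$-th block, viewed as a function of the full state, with the Filippov set-valued map of the scalar subsystem evaluated at $x^{(\ell)}$, you should invoke Proposition~\ref{p:calculus for Filippov}(2) with the (linear, full row rank) projection $x\mapsto x^{(\ell)}$ in addition to part (1), which by itself only yields the Cartesian-product inclusion in the full state variable.
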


Notice that the controller \eqref{e:control} satisfying Assumption \ref{ass_vectorfield_2} has the invariant set $\calS_\infty$ defined by the $\ell_\infty$-norm. This is one major difference compared to \eqref{e:control} satisfying Assumption \ref{ass_vectorfield_1}. The following corollary to Theorem \ref{th:main_vector} is obtained in the scope of component-wise signum functions. 

\begin{cor}
Consider the nonlinear consensus protocol (\ref{e:nonlinear_vector}) satisfying Assumption~\ref{ass_vectorfield_2} and the corresponding differential inclusion (\ref{e:nonlinear_vector_fili}). Then, the following statements hold:
\begin{itemize}
		\item[\textit{(i)}] If $|\calI| > 2$, then all Filippov solutions of (\ref{e:nonlinear_vector_fili}) converge to consensus in finite time if and only if $|\calI_c| = 1$;
		\item[\textit{(ii)}] If $|\calI| = 2$, then all Filippov solutions of (\ref{e:nonlinear_vector_fili}) converge to consensus in finite time if and only if $|\calI_c| \leq 1$.
\end{itemize}
\end{cor}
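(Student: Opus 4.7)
The plan is to exploit the key structural feature of Assumption~\ref{ass_vectorfield_2}: since each $f_i$ acts component-wise and $\bar{L} = L \otimes I_k$, the dynamics of \eqref{e:nonlinear_vector} fully decouple across the $k$ coordinates. Writing $y^{(\ell)}_i = x_{i,\ell}$, the system becomes $k$ independent scalar consensus systems $\dot{y}^{(\ell)}_i = f_{i,\ell}(-(L y^{(\ell)})_i)$, $i \in \calI$, each of which is of the form \eqref{e:nonlinear_vector} with $k=1$ and the same index set $\calI_c$ of continuous agents. Because the scalar direction-preserving signum coincides with the ordinary scalar $\sign$, every scalar subsystem satisfies Assumption~\ref{ass_vectorfield_1}, so that Theorem~\ref{th:main_vector} applies directly.

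For sufficiency, I would first verify that the coordinate projection $y^{(\ell)}$ of any Filippov solution $x(t)$ of the full differential inclusion \eqref{e:nonlinear_vector_fili} is itself a Filippov solution of the $\ell$-th scalar differential inclusion; this follows from the factorization $\calF[\sign_c](w) = \calF[\sign](w_1) \times \cdots \times \calF[\sign](w_k)$ combined with Proposition~\ref{p:calculus for Filippov}(i). Once this projection property is in hand, Theorem~\ref{th:main_vector} yields for each $\ell$ a finite time $t^*_\ell$ after which $y^{(\ell)}(t)$ is a static multiple of $\one$, and taking $t^* = \max_\ell t^*_\ell$ gives static consensus of $x$ for all $t \geq t^*$.

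For necessity, I would argue contrapositively: assuming the forbidden case (so $|\calI_c| \geq 2$ when $|\calI|>2$, or $|\calI_c|=2$ when $|\calI|=2$), the necessity part of Theorem~\ref{th:main_vector} supplies a scalar initial condition $\tilde{y}_0$ whose associated Filippov trajectory $\tilde{y}(t)$ fails to achieve static consensus in finite time. I would embed this counter-example into the first coordinate of the multi-dimensional system, placing zeros in coordinates $2,\ldots,k$, and verify Filippov-admissibility of the zero trajectory on those coordinates, which holds because $0 \in \calF[\sign](0)$ and $f_{i,\ell}(0) = 0$ for $i \in \calI_c$. The resulting trajectory is a Filippov solution of \eqref{e:nonlinear_vector_fili} that inherits the non-convergence of $\tilde{y}$.

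The main technical obstacle is justifying the product decomposition $\calF[\sign_c](w) = \calF[\sign](w_1) \times \cdots \times \calF[\sign](w_k)$ with \emph{equality}, since Proposition~\ref{p:calculus for Filippov}(i) only supplies the inclusion $\subset$. The reverse inclusion has to be checked directly from definition \eqref{eqn_Filippovdef}, exploiting that the discontinuity set of $\sign_c$ is a union of coordinate hyperplanes so that arbitrary sign choices along different coordinates can be realised simultaneously by perturbations in pairwise-disjoint regions of any ball around $w$. This is the only step that requires work beyond a direct appeal to Theorem~\ref{th:main_vector}; the remainder is a clean coordinate-by-coordinate reduction.
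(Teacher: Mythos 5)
Your overall route---decouple the dynamics coordinate-wise into $k$ scalar consensus systems, each satisfying Assumption~\ref{ass_vectorfield_1} with the same $\calI_c$, and invoke Theorem~\ref{th:main_vector}---is exactly the reduction the paper intends (the corollary is stated without proof, with the decoupling merely asserted). The sufficiency half is essentially sound, up to one repair: to conclude that the projections $y^{(\ell)}$ of a Filippov solution are Filippov solutions of the scalar subsystems, group by coordinate, writing $h=\bigtimes_{\ell}\big(g^{(\ell)}\circ\pi_\ell\big)$ with $\pi_\ell$ the coordinate projection, and combine Proposition~\ref{p:calculus for Filippov}(i) with item (ii) (applicable since $\pi_\ell$ is $C^1$ with full-rank Jacobian) to obtain $\calF[h](x)\subset\bigtimes_{\ell}\calF[g^{(\ell)}](y^{(\ell)})$. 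Your stated chain (factorizing $\calF[\sign_c]$ agent-by-agent and then applying item (i)) only places $\dot y^{(\ell)}$ in the larger relaxed map $\bigtimes_i\calF[f_{i,\ell}]\big(-(Ly^{(\ell)})_i\big)$, which is not literally the scalar Filippov inclusion to which the statement of Theorem~\ref{th:main_vector} applies.

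The genuine gap is in necessity. The equality you single out, $\calF[\sign_c](w)=\bigtimes_{\ell}\calF[\sign](w_\ell)$, even once proved, can only enter through Proposition~\ref{p:calculus for Filippov}(i), i.e., it produces upper bounds ($\subset$) on $\calF[h](x)$; your embedding argument needs a \emph{lower} bound, namely that the candidate velocity $\big(\dot{\tilde y}(t),\mathbf{0},\ldots,\mathbf{0}\big)$ actually belongs to $\calF[h](x(t))$. Checking componentwise that $0\in\calF[\sign](0)$ and $f_{i,\ell}(0)=0$ does not suffice, because the Filippov map of the coupled field is in general strictly smaller than the componentwise product over agents: for two all-signum agents at consensus, the proof of Lemma~\ref{lm:invariant_vec} shows every $\nu\in\calF[h]$ satisfies $\nu_1=-\nu_2$. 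What is actually needed is the blockwise identity $\calF[h](x)=\bigtimes_{\ell=1}^{k}\calF[g^{(\ell)}](y^{(\ell)})$ (or at least the inclusion $\supset$), which does hold because the blocks $g^{(\ell)}\circ\pi_\ell$ depend on pairwise disjoint sets of variables, but it must be proved directly from \eqref{eqn_Filippovdef} by a Fubini-type slicing argument (for a given null set $S$, almost every slice in the complementary variables is null; then two successive convex combinations---first within a block at fixed complementary coordinates, then across such points---realize any vector of the product). In addition, the frozen coordinates require $\mathbf{0}_n\in\calF[g^{(\ell)}](\mathbf{0}_n)$, which is again not a componentwise fact; it follows, e.g., from the odd symmetry of the signum agents by averaging the values at $y$ and $-y$ and letting $\|y\|\to 0$. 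With these two points supplied, your embedded scalar counterexample from Theorem~\ref{th:main_vector} is indeed a Filippov solution of \eqref{e:nonlinear_vector_fili} and the necessity argument closes.
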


\section{Conclusion}\label{s:conclusions}
In this paper, we considered the finite-time consensus problem for high-dimensional multi-agent systems. Two finite-time consensus control protocols are proposed, one using direction-preserving signum and another using component-wise signum. The second controller uses coarser information compared to the first one. However the second one can only guarantee that the unit ball in $\ell_\infty$-norm is strongly invariant, while the first one can be designed such that the $\ell_p$-norm unit ball is strongly invariant for any $p\in[1,\infty]$. For these two controller, sufficient and necessary conditions were presented to guarantee finite-time convergence and boundedness of the solutions.

\bibliographystyle{plain} 
\bibliography{ref}

\end{document}